\newcommand{\note}[1]{ \textcolor{red}{ \ [ \ #1 \  ] \ }} 
\newcommand\SJ[1]{{\color{black}{#1}}} 
\newcommand\KUK[1]{{\color{black}{#1}}} 
\newcommand\PS[1]{{\color{black}{#1}}}
\newtheorem{thm}{Theorem}[section]
\newtheorem{proposition}[thm]{Proposition}
\newtheorem{lemma}[thm]{Lemma}
\newtheorem{remark}[thm]{Remark}
\newcommand{\eps}{\epsilon}
\newcommand{\R}{\mathbb{R}}
\newcommand{\dd}{{\mathrm d}}
\newcommand{\transpose}{\textnormal{T}}
\newcommand{\e}{{\mathrm e}}
\title{Travelling Waves and Exponential Nonlinearities in the Zeldovich-Frank-Kamenetskii Equation}
\author{S.~Jelbart, K.~U.~Kristiansen \& P.~Szmolyan}
\date{\today}
\begin{document}
	\maketitle
	
	\begin{abstract}
		We prove the existence of a family of travelling wave solutions in a variant of the \textit{Zeldovich-Frank-Kamenetskii (ZFK) equation}, a reaction-diffusion equation which models the propagation of planar {laminar premixed} flames in combustion theory. Our results are valid in an asymptotic regime which corresponds to a reaction with high activation energy, and provide a rigorous and geometrically informative counterpart to formal asymptotic results that have been obtained {for similar problems} using \textit{high activation energy asymptotics}. We also go beyond the existing results by (i) proving smoothness of the minimum wave speed function $\overline c(\eps)$, where $0<\eps \ll 1$ is the small parameter, and (ii) providing an asymptotic series for a flat slow manifold which plays a role in the construction of travelling wave solutions for non-minimal wave speeds $c > \overline c(\eps)$. The analysis is complicated by the presence of an exponential nonlinearity which leads to two different scaling regimes as $\eps \to 0$, which we refer to herein as the \textit{{convective}-diffusive} and \textit{diffusive-reactive} zones. The main idea of the proof is to use the geometric blow-up method to identify and characterise a $(c,\eps)$-family of heteroclinic orbits which traverse both of these regimes, and correspond to travelling waves in the original ZFK equation. More generally, our analysis contributes to a growing number of studies which demonstrate the utility of geometric blow-up approaches to the study dynamical systems with singular exponential nonlinearities.
	\end{abstract}
	
	\medskip
	
	\noindent {\small \textbf{Keywords:} geometric singular perturbation theory, geometric blow-up, travelling waves, combustion theory, exponential nonlinearity}
	
	\noindent {\small \textbf{MSC2020:} 34E13, 34E15, 34E10, 35K57, 80A25}

	\section{Introduction}
\label{sec:introduction}

Exponential nonlinearities appear \SJ{in} differential equation models across the natural sciences and beyond, ranging from models of earthquake faulting \cite{Bossolini2017b,Erickson2008,Kristiansen2019b} to electrical oscillators \cite{Hester1968,Jelbart2019c,LeCorbeiller1960} and plastic deformation in metals \cite{Brons2005,Estrin1980}. 
It is also common for modelers to introduce exponential nonlinearities for mathematical reasons, for example in the regularisation of non-smooth systems using a $\tanh$ function \cite{Kristiansen2017}, the study of gene regulatory networks which converge to a non-smooth approximating system at an exponential rate \cite{Glass2018,Ironi2011,Machina2013,Plahte2005}, or more recently, via the introduction of singular exponential coordinates in order to study {polynomial} systems using techniques based on tropical geometry \cite{Kristiansen2023,Portisch2017}.

All of the problems cited above share a common mathematical feature, namely, that the exponential nonlinearities lead to singular perturbation problems which have \PS{very different} 
dynamics \PS{and asymptotics} in different regions of phase space. 
The basic reason for this can be seen by considering the nonlinearity $\e^{x / \eps}$, which is exponentially small, $\mathcal O(1)$, or exponentially large as $\eps \to 0$ when $x < 0$, $x = \mathcal O(\eps)$, or $x > 0$ respectively. Thus, if $x$ is a {state variable} in a system {of} differential equations, then one expects to obtain {distinct dynamics for $x<0$, $x = 0$ and $x > 0$ as $\eps \to 0$}. 
Regions of phase space where the limit is not defined due to unbounded growth, in this case when $x > 0$, can often be controlled after a suitable `normalisation' (positive transformation of time). For example, the authors in \cite{Jelbart2019c} applied a normalisation which amounted to a division of the governing equations by $1 + \e^{x / \eps}$ in order to control the (otherwise unbounded) growth of the nonlinearity $\e^{x / \eps}$. 
Since
\[
\SJ{\frac{\e^{x/\eps}}{1+\e^{x/\eps}}\rightarrow
	\begin{cases}
		0 &x<0\\
		1 &x>0
	\end{cases}\quad \text{for}\quad \eps\to 0,}
\]
the normalised system is \textit{piecewise-smooth} in the limit $\eps \to 0$. 
From here, the loss of smoothness can be `resolved' via a geometric blow-up procedure which involves the insertion of an inner/switching layer which describes the dynamics for $x = \mathcal O(\eps)$, i.e.~`in between' the distinct regimes $x< 0$ and $x > 0$, using techniques that have in recent years been developed and applied in a large number of works, e.g.~in  \cite{bonet-rev2016a,Carvalho2011,Jelbart2021,Jelbart2020d,Kristiansen2019c,Kristiansen2015b,Kristiansen2015a,Kristiansen2019,Kristiansen2019d,Llibre2007}. 

In this work we show that similar techniques can also be used to describe travelling fronts in a reaction-diffusion equation known as the \textit{Zeldovich-Frank-Kamenetskii (ZFK)} equation, which arises in combustion theory as a model for the propagation of planar laminar premixed flames \cite{Buckmaster1982,Bush1979,Clavin2016,Williams1985,Zeldovich1938}. We focus on one of the simplest variants of the ZFK equation\SJ{:}
\begin{align}\label{eq:hallo}
\KUK{\theta_t = 
	\theta_{xx} +  \frac{\beta^2}{2}\theta (1 - \theta) \e^{- \beta (1- \theta)},}
	\end{align}
	\SJ{where $t \geq 0$, $x \in \R$ and $\theta = \theta(x,t) \in [0,1]$ is a dimensionless variable which is related to the temperature of the reaction. Equation \eqref{eq:hallo} appears in \cite{Clavin2016}; we refer to 
	Section \ref{sec:the_ZFK_equation} for details}.  The small parameter is $\eps := \beta^{-1}$, where $\beta$ is the so-called \textit{Zeldovich number}, a dimensionless quantity related to the activation energy of the Arrhenius reaction, and $\beta \to \infty$ ($\eps \to 0$) is the high activation energy limit. 
\KUK{In the pioneering work of Zeldovich and Frank-Kamenetskii \cite{Zeldovich1938}, the authors introduced the scalings that give rise to the specific (exponential) form  of the reaction term in \eqref{eq:hallo} and led to the development of \textit{high activation energy asymptotics (HAEA)}. Notice that in the limit of large activation energy ($\beta\rightarrow \infty$), the reaction term is exponentially small for $\theta<1$. 
A key observation of \cite{Zeldovich1938}, which forms the basis of HAEA, is that this leads to two different scaling regimes: a \textit{{convective}-diffusive zone} where the reaction term is exponentially small, and a \textit{reactive-diffusive zone} where a {convective} term which arises in the travelling wave problem can be neglected; we explain this in more details in the context of \eqref{eq:hallo} in Section \ref{sec:HAEA}. 
%
Their work was fundamental for many important works on pre-mixed flames which were based on HAEA, we refer to e.g.~\cite{Buckmaster1982,Bush1979,Clavin2016,Williams1985} and the many references therein.}

\KUK{Despite the success of HAEA in explaining numerous fundamental mechanisms in combustion processes, rigorous mathematical analysis seems to be lacking in this area. 
In this paper, we study travelling waves of \eqref{eq:hallo} using dynamical systems methods, in particular \textit{geometric singular perturbation theory (GSPT)} \cite{Fenichel1979,Jones1995,Wechselberger2019}. The main analytical obstacle to be overcome from the viewpoint of GSPT, stems from the presence of the singular exponential nonlinearity in \eqref{eq:hallo} as $ \eps= \beta^{-1}\to 0$. We note that \eqref{eq:hallo}, which is representative of problems with exponential nonlinearities quite generally, is smooth for each $\eps > 0$, but non-smooth in the limit $\eps \to 0$. This distinguishes it from the problems considered in e.g.~\cite{Dumortier2010,Miao2024,Popovic2007,Popovic2011,Popovic2011b,Popovic2012}, where travelling waves that are identified using GSPT and geometric blow-up feature a non-smooth cutoff which renders the equations non-smooth for $\eps > 0$.}

The travelling wave problem can be formulated as a boundary value problem in a planar system of singularly perturbed ODEs. \KUK{We use the geometric blow-up method to glue the convective-diffusive and reactive-diffusive zones together}, and subsequently identify a heteroclinic orbit which corresponds to a travelling wave in the original ZFK equation. Specifically, we show that for each $\eps > 0$ sufficiently small there exists a $(c,\eps)$-family of travelling waves, where $c > 0$ is the wave speed, and we prove the existence of a minimal wave speed $c = \overline c(\eps)$. 
These results can be seen as a rigorous and geometrically informative counterpart to existing results {that have been derived for similar problems using formal methods based on} HAEA. 
In addition to providing a rigorous justification of existing results, we \SJ{go beyond the existing results by (i) proving} smoothness of the minimum wave speed function $\overline c(\eps)$, and \SJ{providing} the asymptotics of $\overline c(\eps)$ up to and including $\mathcal O(\eps)$. This calculation shows that $\overline c(\epsilon)>1$ for all $0<\eps\ll 1$. Moreover, we (ii) identify and provide an asymptotic expansion for a slow manifold which characterises part of the wave profile for non-minimal wave speeds $c > \overline c(\eps)$ in the {convective}-diffusive zone. We use a normal form from \cite{DeMaesschalck2016} in order to derive the smoothness results in (i), and we use an approach inspired by \cite{Kristiansen2017} to derive the asymptotics in (ii). The approach used in (ii) is of independent interest, insofar as we expect that it can be applied in order to determine the asymptotics of flat slow manifolds quite generally. 



\PS{We expect our methods to apply to more complicated models for planar flame propagation away from so-called \textit{flammability limits} which -- like the simple ZFK equation considered herein -- can be described by a (system of) reaction-diffusion equation(s) 
for which the reaction rate is determined by an Arrhenius law with high activation energy (this is similar to the electrical oscillator models from \cite{Hester1968,LeCorbeiller1960} considered in \cite{Jelbart2019c}).} 

The manuscript is structured as follows: In Section \ref{sec:the_ZFK_equation}, we introduce the ZFK equation and \KUK{present a HAEA approach to the analysis of the ODE that describes the travelling wave problem, based upon {convective}-diffusive and reactive-diffusive zones}. \KUK{In Section \ref{sec:geometric_singular_perturbation_analysis}, we set up our geometric singular perturbation analysis of the problem of travelling waves}. {We state the main result in Section \ref{sec:main_result}, which is} proven using geometric blow-up and normal forms in Section \ref{sec:proof}. We conclude in Section \ref{sec:summary_and_outlook} with a summary and outlook.

\section{\KUK{The ZFK equation and HAEA}}
\label{sec:the_ZFK_equation}

Our starting point is the scalar ZFK equation
\begin{equation}
	\label{eq:ZFK_eqn}
	\theta_t = 
	\theta_{xx} + \omega(\theta, \beta) ,
\end{equation}
where $t \geq 0$, $x \in \R$ {and $\theta \in [0,1]$ is the so-called \textit{reduced temperature}, a non-dimensional variable which measures the \SJ{temperature} ratio of burnt to unburnt gas \SJ{in an Arrhenius reaction of the form
		\[
		R \to P + Q ,
		\]
		where $R$, $P$ and $Q$ denote the reactive premix/fuel species, the combustion products and the net energy release per mole of $R$ consumed respectively.}\footnote{\SJ{More precisely, $\theta(x,\cdot) = (T(x,\cdot) - T_u) / (T_b - T_u)$, where $T$, $T_u$ and $T_b$ denote the temperature of the gas, the completely fresh/unburnt and burnt gas respectively.}} 
	In particular, $\theta = 0$ corresponds to completely unburnt premix/fuel, and $\theta = 1$ corresponds to a completely burnt state.} {The function} $\omega(\theta,\beta)$ models the reaction term, which depends upon the \textit{Zeldovich number} $\beta > 0$. 
Equation \eqref{eq:ZFK_eqn} can be obtained via a reduction from a two component system of reaction-diffusion equations which models the propagation of planar flames if the diffusivity of the \SJ{``deficient reactant" (the reduced mass fraction associated with the reacting premix/fuel species $R$) is equal to the thermal diffusivity associated with the reduced temperature $\theta$,} 
i.e.~when the so-called \textit{Lewis number} $L \equiv 1$. We refer to \cite{Buckmaster1982,Bush1979,Clavin2016,Williams1985,Zeldovich1938} and the many references therein for details on the physical interpretation and derivation of the model. We consider the case for which the reaction term is given by
\begin{equation}
	\label{eq:omega}
	\omega(\theta, \beta) = \lambda \theta (1 - \theta) \e^{- \beta (1- \theta)} .
\end{equation}
{where $\lambda > 0$ is a parameter.} 
The asymptotic regime $0<\beta \ll 1$ is known as the \textit{KPP regime}, since equation \eqref{eq:ZFK_eqn} reduces to the KPP equation when 
$\beta \to 0$. We shall be interested in the opposite regime $\beta \gg 1$, known as the \textit{ZFK regime}, and the corresponding high activation energy limit $\beta \to \infty$. {The parameter $\lambda > 0$ can be scaled out using the parabolic rescaling $(x,t) \mapsto (x / \sqrt{\lambda}, t / \lambda )$, however, it is customary to fix a particular choice for $\lambda$ which ensures that the minimum wave speed derived in later sections is of order unity. We follow this convention here, and set} 
\[
\lambda = \frac{\beta^2}{2} :\quad \omega(\theta, \beta)=\frac{\beta^2}{2}\theta (1 - \theta) \e^{- \beta (1- \theta)} ,
\]
{without loss of generality.} This particular choice of $\omega$ and $\lambda$ was used in \cite[Ch.~8.3]{Clavin2016} in order to study the so-called \textit{KPP-ZFK transition} from \cite[Part.~F]{Velarde2012}. 
We shall write 
\[
\eps := \frac{1}{\beta} :\quad \omega(\theta, \epsilon^{-1})=\frac{1}{2\epsilon^2}\theta (1 - \theta) \e^{- \epsilon^{-1} (1- \theta)}
\]
and consider the asymptotic behaviour as $\eps \to 0$ (which corresponds to $\beta \to \infty$).
\begin{remark}\label{rem:this} {We restrict attention to the {physically} meaningful domain $\theta\in [0,1]$, but do note that $\omega$ is unbounded for $\theta>1$ for $\eps\to 0$. The graph of $\theta\mapsto \omega(\theta,\eps^{-1})$, $\theta\in [0,1]$, for five different values of $\eps$ is shown in Fig.~\ref{fig:graph}. Notice that $\omega(1,\eps^{-1})=0$ for all $\eps>0$ and that $\omega$ is exponentially small with respect to $\eps\to 0$ for fixed $\theta\in [0,1)$. Nevertheless, simple calculations show that $\omega$ is unbounded on $\theta\in [0,1]$ due a local maximum $\theta=\theta_c(\epsilon) =1+\mathcal O(\epsilon)$ with $\omega(\theta_c(\epsilon),\epsilon)=\mathcal O(\epsilon^{-1})$.  As we shall see, this lack of uniformity with respect to $\eps\to 0$ leads to distinct limiting problems in two distinct regimes.}
\end{remark}


\begin{figure}[t!]
\centering
\includegraphics[scale=0.5]{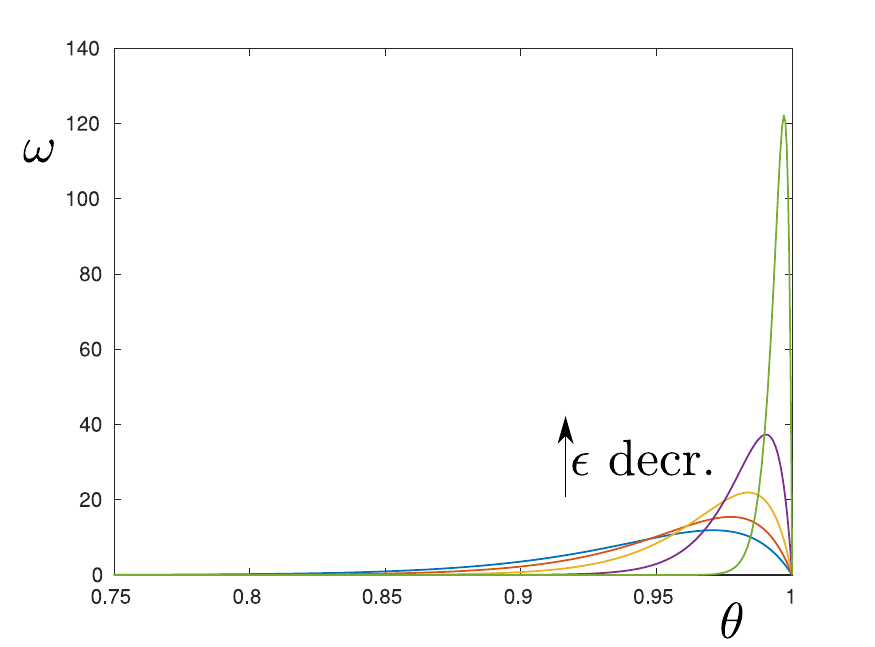}
\caption{Graph of $\theta\mapsto \omega(\theta,\eps^{-1})$ for five different values of $\eps$, equidistributed between $\eps=0.03$ (blue) and $\eps=0.003$ (green).}
\label{fig:graph}
\end{figure}

\begin{remark}
\label{rem:cold_boundary}
In combustion theory, one typically encounters the form
\begin{equation}
	\label{eq:omega_physical}
	\omega(\theta, \beta) = \lambda (1 - \theta) \e^{- \beta (1 - \theta)} 
\end{equation}
instead of \eqref{eq:omega}, see e.g.~\cite[Ch.~2.1 \& Ch.~8.2]{Clavin2016} and \cite{Buckmaster1982,Bush1979,Williams1985}. This form can be derived from the chemistry, however, the travelling wave problem is ill-posed because $\omega(0, \beta) =  \lambda \e^{- \beta} > 0$ for all $\beta > 0$, which means that the boundary condition at $\theta = 0$ (see equation \eqref{eq:bc} below) cannot be satisfied. This is known as the \textit{cold-boundary difficulty} \cite{Clavin2016,Williams1985}. In order to circumvent it, one can either (i) adjust the reaction term so that $\omega(0,\beta) = 0$, as in \eqref{eq:omega}, or (ii) introduce a cutoff by imposing that $\omega(\theta, \beta) \equiv 0$ for all $\theta < \theta_c$, where $\theta_c \in (0,1)$ is close to zero. The former approach is typically preferred in analytical studies such as ours, while the latter tends to be preferable for applications and simulations. 
Analytically, we expect that the ZFK equation \eqref{eq:ZFK_eqn} with \eqref{eq:omega_physical} and a cutoff can be treated using a combination of our approach with the geometric methods developed for problems 
with cutoffs in \cite{Dumortier2010,Miao2024,Popovic2007,Popovic2011,Popovic2011b,Popovic2012}.
\end{remark}


\subsection{\SJ{Formal construction of travelling waves via HAEA}}\label{sec:HAEA}

\SJ{Travelling wave solutions for $0 < \eps \ll 1$ which lie within the physically meaningful regime 
	$\theta \in [0,1]$ can be formally constructed using HAEA. In the following we provide an overview of this construction. While the exposition closely parallels asymptotic treatment of scalar ZFK equations in e.g.~\cite{Clavin2016,Williams1985}, the overall approach and HAEA methodology can be traced back to seminal papers by Zeldovich and Frank-Kamenetskii in 1938 in \cite{Zeldovich1938}.}

\SJ{As usual, the first step is to rewrite} equation \eqref{eq:ZFK_eqn} in the travelling wave frame $z = x + c t$, where $c \in \R$ is the (presently unknown) wave speed. \SJ{This leads to} the following second order ODE for travelling waves $\theta(x,t)=\widetilde\theta(z)$:
\begin{equation}
	\label{eq:ZFK_steady_state_ODE}
	c \frac{\dd \widetilde \theta}{\dd z} = \frac{\dd^2 \widetilde\theta}{\dd z^2} + \frac{1}{2 \eps^2} \widetilde \theta (1 - \widetilde\theta) \e^{- \eps^{-1} (1 - \widetilde\theta)} .
\end{equation}
We drop the tildes henceforth.
As in \cite[Ch.~2.1.2 \& 8.2]{Clavin2016}, the boundary conditions are
\begin{equation}
	\label{eq:bc}
	\lim_{z \to -\infty} \theta(z) = 0 , \qquad 
	\lim_{z \to \infty} \theta(z) = 1.
\end{equation}

\SJ{The key observation is that, due to the form of the reaction term \eqref{eq:omega}, there are two important regimes to consider:
	\begin{itemize}
		\item A \textit{{convective}-diffusive zone} defined by $1 - \theta = \mathcal O(1)$, where the reaction term in \eqref{eq:ZFK_system} is 
		\PS{exponentially}  small;
		\item A \textit{reactive-diffusive zone} defined by $1 - \theta = \mathcal O(\eps)$, where the advection term $c \frac{\dd \theta}{\dd z} $ is small relative to the reaction and diffusion terms due to the spatial scale,
	\end{itemize}
	see \cite[Fig. 2.4]{Clavin2016}.
	The basic idea in HAEA is to treat the leading convective-diffusive and reactive-diffusive zones as separate asymptotic regimes. In the convective-diffusive zone, the reaction term can be neglected, which leads to the simpler equation}
\[
\SJ{c \frac{\dd \theta}{\dd z} = \frac{\dd^2 \theta}{\dd z^2} .}
\]
\KUK{If one fixes the location of the wave according to $\theta(0)=1$, then the solution $\theta_-$ satisfying $\lim_{z\rightarrow -\infty} \theta(z) = 0$ is given by $ \theta_-(z) = \e^{c z}$. Notice that 
	\begin{equation}\label{eq:this}
		\theta_-'(0)=c.
	\end{equation}
	
	Next, in order to deal with the boundary condition at $z\rightarrow \infty$, we need to move to the reactive-diffusive zone. 
	For this purpose, we introduce scaled variables  $\theta_2$ and $z_2$ by setting $\theta = 1 + \eps \theta_2$ and $z = \eps z_2$; here the subscript ``2" is chosen for consistency with our notation in later sections. Substituting this into \eqref{eq:ZFK_steady_state_ODE}, using 
	\begin{equation}\label{eq:this3}
		\frac{\dd\theta}{\dd z}=\frac{\dd\theta_2}{\dd z_2},\quad \frac{\dd^2\theta}{\dd z^2}=\epsilon^{-1} \frac{\dd^2\theta_2}{\dd z_2^2},
	\end{equation}
	and simplifying leads to
	\begin{equation}
		\label{eq:rd_zone_eqn}
		\frac{\dd^2 \theta_2}{\dd z_2^2} = \frac{1}{2} \theta_2 \e^{\theta_2},
	\end{equation}
	as $\eps \to 0$. 
	In accordance with the original boundary condition as $z \to \infty$, recall \eqref{eq:bc}, we are now looking for a solution $\theta_{2+}$ of \eqref{eq:rd_zone_eqn} satisfying
	\begin{equation}\label{eq:this2}
		\lim_{z_2\rightarrow \infty}\theta_2(z_2) = 0.
	\end{equation}
	Equation \eqref{eq:rd_zone_eqn} has a first integral:
	\[H(\theta_2,\theta_2') = \frac{ \left(\theta_2'\right)^2}{2} -\frac{\theta_2-1}{2}e^{\theta_2},\]
	so that $\frac{\dd}{\dd z_2} H(\theta_2,\theta_2')=0$ when $\theta_2=\theta_2(z_2)$ solves \eqref{eq:rd_zone_eqn}.
	We 
	notice that $\theta_{2+}$ corresponds to the level set 
	\begin{equation}\label{eq:this4}
		H(\theta_{2},\theta_{2}')=\frac12,
	\end{equation} 
	and we select $\theta_{2+}$ so that it corresponds to the branch of the level set where $\theta_{2}$ is increasing (see also the phase portrait in Fig. \ref{fig:K2_singular_limit}). Therefore by letting $z_2\rightarrow -\infty$, we obtain that
	\begin{equation*}
		\lim_{z_2\rightarrow -\infty} \theta_{2+}(z_2)=-\infty\quad \mbox{and}\quad \lim_{z_2\rightarrow -\infty} \theta_{2+}'(z_2)=1 ,
	\end{equation*}
	where we for the latter have used \eqref{eq:this4}. Consequently, in order to match up with $\theta_-(z)$ at \SJ{$z=0$}, see \eqref{eq:this} and the first equation in \eqref{eq:this3}, we conclude that $c=1$ in this case.}

\section{Geometric singular perturbation analysis}
\label{sec:geometric_singular_perturbation_analysis}

\KUK{In this section, we set up our geometric singular perturbation analysis of the travelling wave problem. For this purpose, we define $\eta := \dd \theta / \dd z$ and rewrite equation \eqref{eq:ZFK_steady_state_ODE}} as the first order system
\begin{equation}
\label{eq:ZFK_system}
\begin{split}
	\dot \theta &= \eta, \\
	\dot \eta &= c \eta - \frac{1}{2 \eps^2} \theta (1 - \theta) \e^{- \eps^{-1} (1 - \theta)} ,
\end{split}
\end{equation}
where the overdot denotes differentiation with respect to $z$. There are two equilibria:
\begin{equation}
\label{eq:equilibria}
p_- : (0,0) , \qquad p_+ : (1, 0) ,
\end{equation}
which exist for all $c \in \R$ and $\eps > 0$. Direct calculations show that $p_-$ is a proper unstable node for all $c>2\epsilon^{-2} \e^{-\epsilon^{-1}}$, {whereas $p_+$ is a saddle. The strong and weak eigenvalues at the node $p_-$, which we denote here by $\lambda_s$ and $\lambda_w$ respectively, satisfy} 
\begin{align*}
\lambda_s := \frac{c}{2} \left(1 +  \sqrt{1 - \frac{2}{c\epsilon^{2} \e^{\epsilon^{-1}}}} \right) >\lambda_w := \frac{c}{2} \left(1 -  \sqrt{1 - \frac{2}{c\epsilon^{2} \e^{\epsilon^{-1}}}}\right) ,
\end{align*}
{and the corresponding} strong and weak eigenvectors {$(1,v_s)^\transpose$} and {$(1,v_w)^\transpose$ converge to $(1,c)^\transpose$ and $(0,1)^\transpose$ respectively as $\eps \to 0$.}
%
On the other hand, $p_+$ 
lies right at the interface
\begin{equation}
\label{eq:Sigma}
\Sigma = \{ (1, \eta) : \eta \in \mathbb R \} 
\end{equation}
between $\theta<1$, where 
{the reaction term is exponentially small}, and $\theta>1$, where {the right-hand side of} \eqref{eq:ZFK_system} is undefined for $\eps\to 0$.

Our aim is to identify and characterise (generally $(c,\eps)$-dependent) heteroclinic connections between $p_-$ and $p_+$ as $\eps \to 0$, which correspond to travelling wave solutions in the ZFK equation \eqref{eq:ZFK_eqn}. {Note that while heteroclinic orbits that connect to $p_-$ tangentially to the weak eigendirection are robust, heteroclinic orbits which connects to $p_-$ tangentially to the strong eigendirection are not. This follows from the uniqueness of the strong \SJ{unstable} manifold, which will be clarified in Lemma \ref{lemma:strong} below. In the following, we also refer to the heteroclinic connections themselves as `weak' or `strong' in order to distinguish these two cases.} 


\begin{remark}
\label{rem:strong_weak_heteroclinic}
System \eqref{eq:ZFK_system} has a symmetry
\begin{equation}
\label{eq:symmetry}
(\eta, z, c) \leftrightarrow (- \eta, -z, -c) .
\end{equation}
In the following we restrict attention to positive wave speeds $c > 0$, keeping in mind that corresponding statements can be derived for negative wave speeds using \eqref{eq:symmetry}. 

\end{remark}

\SJ{Motivated by the formal HAEA approach presented in Section \ref{sec:HAEA}, we begin 
with independent analyses of the geometry and dynamics in the convective-diffusive and reactive-diffusive zones.}

\subsection{Equations in the {convective}-diffusive zone}
\label{sub:outer_equations}

Taking $\eps \to 0$ in system \eqref{eq:ZFK_system} with $\theta\in [0,1)$ leads to the limiting problem in the {convective}-diffusive zone:{
\begin{equation}
\label{eq:ZFK_layer_problem}
\begin{split}
\dot \theta &= \eta , \\
\dot \eta &= c \eta ,
\end{split}
\end{equation}
}which does not depend on $\omega$. System \eqref{eq:ZFK_layer_problem} has a $1$-dimensional critical manifold
\[
S := \left\{ (\theta, 0) : \theta \in [0,1) \right\} ,
\]
and the Jacobian along $S$ has a single non-trivial eigenvalue $\lambda = c > 0$. Thus, $S$ is normally hyperbolic and repelling. Note that $p_- \in S$, but $p_+ \notin S$ because $\theta<1$ in the {convective}-diffusive zone. 
The fast fibers are contained within 
lines of the form
\[
\eta(\theta) = c \theta + \eta(0) ,
\]
see Fig. \ref{fig:singular_limit}, and the strong \SJ{unstable} manifold of $p_-$ is described by the following. 

\begin{figure}[t!]
\centering
\includegraphics[scale=0.35]{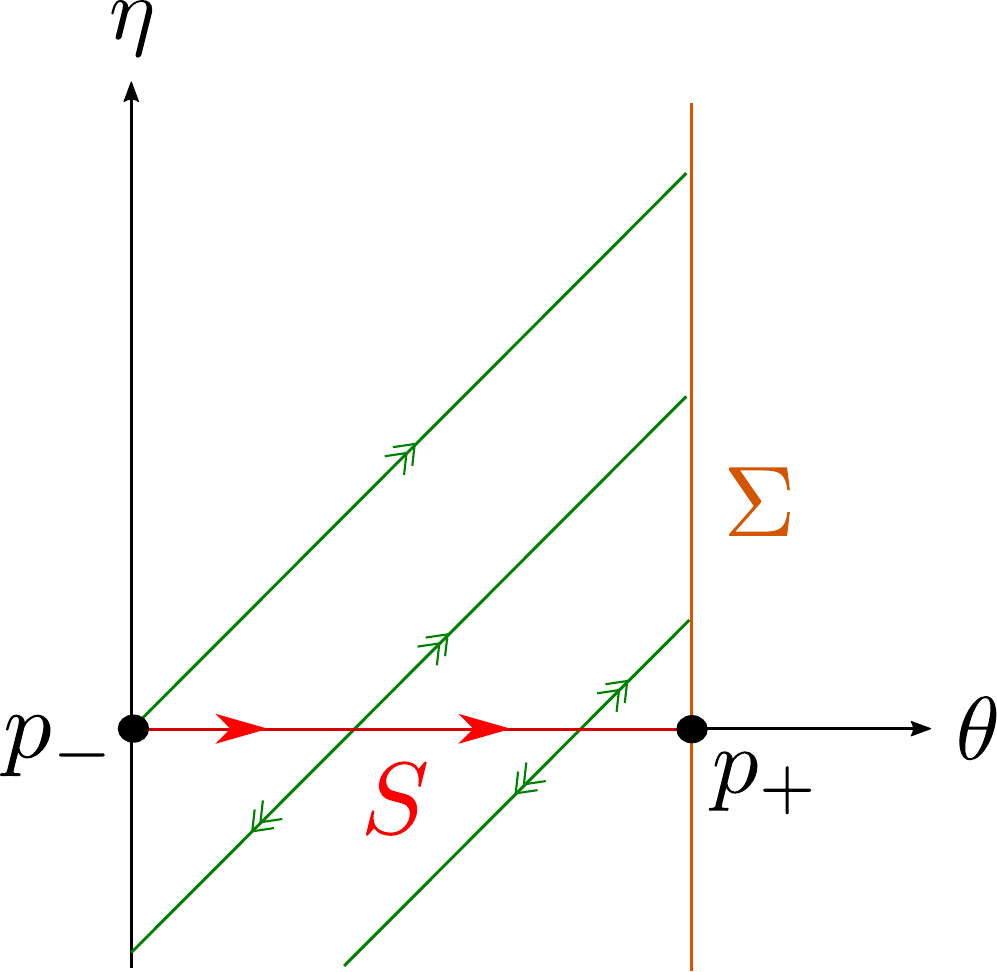}
\caption{Limiting geometry and dynamics of system \eqref{eq:ZFK_system} as $\eps \to 0$ in the {convective}-diffusive zone. 
We show the critical manifold $S$ (in red), and the set $\Sigma$ (in orange). 
$S$ is normally hyperbolic and repelling for $\theta \in [0,1)$. The fast fibers (in green) are contained in lines of slope $c$, with double arrows to indicate the `fast flow' (with respect to $z = x + ct$). The `slow flow' on $S$ is increasing between the equilibria $p_-$ and $p_+$, and indicated by single arrows. The colouring and labelling conventions adopted here are applied throughout.}
\label{fig:singular_limit}
\end{figure}

\begin{lemma}\label{lemma:strong}
	{Fix any $\kappa \in (0,1)$ and $c>0$. Then there exists an $\eps_0 = \eps_0(\kappa,c) > 0$ such that for all $\eps \in [0, \eps_0)$ the strong unstable manifold of $p_-$ is contained in a $C^\infty$-smooth graph defined by
			\begin{align}\label{eq:ss} \eta = c\theta + R(\theta,\eps),\end{align}
		over $\theta\in [0,1-\kappa]${, where} $R$ {is} infinitely flat with respect to $\epsilon\rightarrow 0^+$, i.e. 
		\begin{align*}
			\frac{\partial^n }{\partial \epsilon^n } R(\theta,0)=0\quad \forall\, n\in \mathbb N,
		\end{align*}
		such that $R(\theta,\epsilon)=\mathcal O(\eps^N)$ for all $N\in \mathbb N$.}
\end{lemma}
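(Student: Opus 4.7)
The plan is to exploit the infinite flatness of the reaction term on the compact range $\theta\in[0,1-\kappa]$ to reduce the statement to a smoothness-in-parameter result for invariant manifolds in an extended system where $\eps$ is permitted to pass through zero. To set this up, I would first observe that the reaction term $N(\theta,\eps):=\frac{1}{2\eps^2}\theta(1-\theta)\e^{-\eps^{-1}(1-\theta)}$ satisfies $\partial_\theta^{\,j}\partial_\eps^{\,k}N(\theta,\eps)=\mathcal O(\eps^M)$ for every $j,k,M\in\mathbb N$, uniformly on $\theta\in[0,1-\kappa]$, because $1-\theta\geq\kappa>0$ forces the factor $\e^{-\eps^{-1}\kappa}$ to dominate every polynomial in $\eps^{-1}$. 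Extending $N$ by zero for $\eps\leq 0$ therefore produces a $C^\infty$ extension of system \eqref{eq:ZFK_system} to $\eps\in(-\eps_0,\eps_0)$; for $\eps\leq 0$ the extension is identically the layer problem \eqref{eq:ZFK_layer_problem}, whose strong fast fibre through $p_-$ is precisely $\eta=c\theta$.

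Next, I would apply an invariant manifold theorem with spectral gap to the extended system. For $\eps>0$, $p_-$ is an unstable node with eigenvalues $\lambda_w<\lambda_s$ satisfying $\lambda_s\to c$ and $\lambda_w\to 0$; for $\eps\leq 0$ the linearisation has spectrum $\{0,c\}$ with eigenvector $(1,c)^\transpose$ associated to $c$. Throughout $\eps\in(-\eps_0,\eps_0)$ the eigenvalue near $c$ is uniformly separated from the other, so the pseudo-hyperbolic (spectral-gap) version of the strong unstable manifold theorem yields a $C^\infty$ family of $1$-dimensional invariant manifolds $W^{uu}(p_-;\eps)$ near $p_-$, depending $C^\infty$-smoothly on $\eps$; in particular $W^{uu}(p_-;\eps)=\{\eta=c\theta\}$ identically for $\eps\leq 0$. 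I would then extend $W^{uu}$ forward to $\theta=1-\kappa$ via the flow: since $W^{uu}$ is tangent to $(1,c)^\transpose$ at $p_-$ one has $\dot\theta=\eta>0$ along $W^{uu}$ near $p_-$, and a Gr{\"o}nwall-type estimate confirms that $W^{uu}(p_-;\eps)$ stays close to $\eta=c\theta$ and remains a graph over the full interval. Smoothness of the flow in $(\theta,\eta,z,\eps)$ transfers to smoothness of the resulting graph function $R$ on $[0,1-\kappa]\times(-\eps_0,\eps_0)$; combined with $R\equiv 0$ for $\eps\leq 0$, Taylor's theorem at $\eps=0$ yields $\partial_\eps^{\,n}R(\theta,0)=0$ for every $n\in\mathbb N$ and hence $R(\theta,\eps)=\mathcal O(\eps^N)$ for every $N$.

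The main obstacle is the smooth dependence of $W^{uu}(p_-;\eps)$ on $\eps$ at $\eps=0$, where $p_-$ is only partially hyperbolic because its weak eigenvalue coalesces with zero. The classical hyperbolic stable/unstable manifold theorem does not apply directly; one has to invoke a pseudo-hyperbolic version requiring only the spectral gap above, and to verify that it delivers \emph{joint} smoothness in $(\theta,\eps)$ of the graph function rather than merely of $W^{uu}$ as a set. An attractive alternative that sidesteps the abstract theorem on the bulk of the interval is to solve the first-order invariance equation $R_\theta(c\theta+R)=-N(\theta,\eps)$ on $\theta\in[\delta,1-\kappa]$ by a contraction mapping in a weighted space of functions flat in $\eps$, which reads off the flatness of $R$ directly from that of $N$; the inner region $\theta\in[0,\delta]$ would then be handled separately by a linear eigenvector analysis at the node $p_-$, using the (flat in $\eps$) formula for $\lambda_s$ recorded just before the statement of the lemma.
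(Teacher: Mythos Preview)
Your approach is correct in outline but takes a different route from the paper at the key step. Both arguments rest on the same observation that the reaction term is infinitely flat in $\eps$ on $\theta\in[0,1-\kappa]$, and both reduce the flatness of $R$ to smoothness of the strong unstable manifold jointly in $(\theta,\eps)$ through $\eps=0$. The divergence is in how that joint $C^\infty$ smoothness is obtained at $\eps=0$, where $p_-$ degenerates from a proper node to a semi-hyperbolic equilibrium with eigenvalues $\{0,c\}$. You propose to invoke a pseudo-hyperbolic (spectral-gap) strong unstable manifold theorem with smooth parameter dependence; such results exist and the relevant gap condition is vacuously satisfied here since the slow eigenvalue is zero, so the route can be completed, but it leans on a somewhat specialized theorem and one must verify that the version cited genuinely gives joint $C^\infty$ rather than only $C^N$ for each $N$ on a shrinking $\eps$-interval---the paper explicitly notes that Fenichel-type arguments only give the latter.

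The paper instead sidesteps this by a polar blow-up of $p_-$: writing $\theta=r_1\theta_1$, $\eta=r_1$ turns the semi-hyperbolic point into a genuine hyperbolic saddle at $(r_1,\theta_1)=(0,c^{-1})$ with eigenvalues $\pm c$, and the blown-up vector field is $C^\infty$ in $(r_1,\theta_1,\eps)$ near that saddle for $\eps\geq 0$. The \emph{classical} hyperbolic stable manifold theorem then yields a $C^\infty$ unstable manifold, smooth also in the parameters $c$ and $\eps$, which blows down to the strong unstable manifold of $p_-$. This is more elementary in that it reduces everything to the textbook hyperbolic case; your approach is more direct but requires heavier machinery (or your suggested contraction-mapping alternative on the invariance equation, which is also viable but needs separate care near $\theta=0$ where the coefficient $c\theta+R$ in $R_\theta$ vanishes).
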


\begin{proof}
	{This follows directly from the Stable Manifold Theorem when $\eps \in (0,\eps_0)$, i.e.~when $p_-$ is a proper node \cite[Thm.~2.8]{Dumortier2006c}. However, it remains to prove the result for $\eps = 0$. Notice that Fenichel theory yields a strong unstable manifold given by a $C^N$-smooth graph of the form $\eta = c \theta + \mathcal O(\eps^N)$ for any $N \in \mathbb N$, however, it does not yield the $C^\infty$ property. In order to prove the $C^\infty$ property we apply} 
	%
	a polar blow-up of ${p_-} : (\theta,\eta)=(0,0)$. This turns the strong unstable manifold into a saddle and the result then follows from {\cite[Thm.~2.7]{Dumortier2006c}}. In the present case, we perform the directional blowup:
	\begin{align*}
		(r_1,\theta_1)\mapsto \begin{cases} \theta &= r_1\theta_1,\\ \eta&=r_1.\end{cases}
	\end{align*}
	Plugging this into \eqref{eq:ZFK_system} gives
	\begin{equation}\label{eq:r1theta1}
		\begin{aligned}
			\dot r_1 &= r_1\left(c - \frac{1}{2 \eps^2} \theta_1 (1 - r_1\theta_1) \e^{- \eps^{-1} (1 - r_1\theta_1)}\right) , \\
			\dot \theta_1 &= 1-\theta_1 \left(c - \frac{1}{2 \eps^2} \theta_1 (1 - r_1\theta_1) \e^{- \eps^{-1} (1 - r_1\theta_1)}\right).
		\end{aligned}
	\end{equation}
	Here $(r_1,\theta_1)=(0,c^{-1})$ is a hyperbolic saddle for $\eps=0$ with the stable manifold contained within the invariant set $r_1=0$. In fact, the system is $C^\infty$ smooth in a neighborhood of $(r_1,\theta_1)=(0,c^{-1})$, also with respect to $\eps\ge 0$. Consequently, there is a hyperbolic saddle for any $0\le \eps\ll 1$ with an unstable manifold that is $C^\infty$, also with respect to $c>0$ and $\eps\ge 0$. This invariant manifold corresponds to the strong unstable manifold upon blowing down. The expansion \eqref{eq:ss} is a consequence of \eqref{eq:r1theta1} being flat with respect to $\eps\rightarrow 0$. 
\end{proof}

In order to justify the orientation of the `reduced flow' indicated by single arrows on $S$ in Fig. \ref{fig:singular_limit}, note that Fenichel theory implies the existence of a locally invariant slow manifold of the form
\begin{equation}
	\label{eq:S_eps}
	S_\eps = \left\{ (\theta, h(\theta, \eps) ) : \theta \in [0, 1 - \kappa] \right\} ,
\end{equation}
for an arbitrarily small but fixed constant $\kappa \in (0,1)$. {{The function} $h$ is $C^N$-smooth for $\epsilon\in [0,\epsilon_0(N))$.} Determining the slow flow on $S_\eps$ is non-trivial, however, because the perturbation in system \eqref{eq:ZFK_system} is flat as $\eps \to 0$.

\begin{proposition}
	\label{prop:slow_manifold}
	The slow manifold $S_\eps$ has the following asymptotic expansion\SJ{:}
	\begin{equation}
		\label{eq:slow_manifold}
		\begin{split}
			\eta = h(\theta, \eps) &= \sum_{k=1}^\infty \frac{1}{2^k} F_k(\theta,\epsilon)  \epsilon^{-3k+1} \e^{-k\epsilon^{-1}(1-\theta)}\\
			&=\frac{1}{2c} \theta(1-\theta)  \epsilon^{-2} \e^{-\epsilon^{-1}(1-\theta)}+\mathcal O \left( \epsilon^{-5} \e^{-2\epsilon^{-1}(1-\theta)} \right),
		\end{split}
	\end{equation}
	with $F_1(\theta,\epsilon) = \frac
	{1}{c}\theta(1-\theta)$ and each $F_k$, $k\ge 2$, smooth satisfying {the recursion}
	\begin{align}\label{Fkrec}
		cF_k = \sum_{j=1}^{k-1} \left(\frac{\partial}{\partial \theta}F_j(\theta,\epsilon)\epsilon+jF_j(\theta,\epsilon) \right) F_{k-j}(\theta,\epsilon),\qquad k\ge 2.
	\end{align}
\end{proposition}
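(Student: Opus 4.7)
The slow manifold $S_\eps$ is characterised by the invariance equation
\begin{equation}\label{inv_eqn}
h \frac{\partial h}{\partial \theta} - c h + \frac{\theta(1-\theta)}{2\eps^2} \e^{-\eps^{-1}(1-\theta)} = 0,
\end{equation}
obtained by substituting $\eta = h(\theta,\eps)$ into \eqref{eq:ZFK_system}. Note that \eqref{eq:slow_manifold} is not a Taylor expansion in $\eps$ (which would be trivial, as $h$ is flat at $\eps = 0$), but rather a power series in the exponentially small quantity $v := \eps^{-3}\e^{-\eps^{-1}(1-\theta)}$ whose coefficients depend smoothly on $(\theta,\eps)$. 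Following the approach inspired by \cite{Kristiansen2017}, the strategy is to desingularise the exponential by promoting $v$ to an independent state variable, in which coordinates the expansion becomes a standard Taylor series amenable to centre-manifold analysis.

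Setting $\widehat\eta := \eps^{-1}\eta$ and appending $\dot\eps = 0$, the system \eqref{eq:ZFK_system} transforms into
\begin{equation}\label{blown_up}
\dot\theta = \eps \widehat\eta, \qquad \dot{\widehat\eta} = c\widehat\eta - \tfrac{1}{2}\theta(1-\theta) v, \qquad \dot v = \widehat\eta v, \qquad \dot\eps = 0,
\end{equation}
which is $C^\infty$ in $(\theta,\widehat\eta,v,\eps)$ on $[0,1-\kappa]\times\R^3$. The submanifold $\{v = \eps^{-3}\e^{-\eps^{-1}(1-\theta)}\}$ is invariant under \eqref{blown_up}, and restricting the flow to it recovers \eqref{eq:ZFK_system}. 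At any point of the line $\{\widehat\eta = v = \eps = 0\}$, the Jacobian of \eqref{blown_up} has a single nonzero eigenvalue $c > 0$ and a $3$-dimensional centre subspace, so the centre manifold theorem furnishes, for each $r \in \mathbb N$, a $C^r$-smooth centre manifold of the form $\widehat\eta = \phi(\theta,v,\eps)$ with $\phi(\theta,0,0) = 0$, defined for $\theta \in [0,1-\kappa]$ and $(v,\eps)$ in a sufficiently small neighbourhood of the origin.

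Invariance of $\{\widehat\eta = \phi\}$ under \eqref{blown_up} amounts to the PDE $\eps\phi\,\partial_\theta\phi + \phi v\,\partial_v\phi = c\phi - \tfrac{1}{2}\theta(1-\theta)v$. Expanding $\phi(\theta,v,\eps) = \sum_{k\ge 1} H_k(\theta,\eps) v^k$ and matching coefficients of $v^k$ yields $cH_1 = \tfrac{1}{2}\theta(1-\theta)$ at order $v$ and
\[
c H_k = \sum_{j=1}^{k-1}\bigl(\eps\, \partial_\theta H_j + j H_j\bigr) H_{k-j}, \qquad k \ge 2,
\]
at higher orders. Rescaling $F_k := 2^k H_k$ recovers $F_1 = \theta(1-\theta)/c$ and the recursion \eqref{Fkrec}; a straightforward induction then shows that each $F_k$ is smooth (in fact, polynomial) in $(\theta,\eps)$. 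Restricting the centre manifold to the invariant submanifold $v = \eps^{-3}\e^{-\eps^{-1}(1-\theta)}$ finally yields $\eta = \eps\phi = \sum_k (F_k/2^k)\,\eps^{-3k+1}\e^{-k\eps^{-1}(1-\theta)}$, which is \eqref{eq:slow_manifold}.

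The main technical obstacle is to identify this invariant curve with the Fenichel slow manifold $S_\eps$ from \eqref{eq:S_eps}. The centre manifold $\phi$ is unique only modulo terms flat in $v$ at $v = 0$, which on the invariant submanifold translate to terms flat in $\eps$ as $\eps \to 0^+$; this matches precisely the flat-in-$\eps$ nonuniqueness of $S_\eps$ in the Fenichel sense, so \eqref{eq:slow_manifold} is the common asymptotic expansion of any valid choice of $S_\eps$. Equivalently, one can verify a posteriori that the $N$-th partial sum $h_N$ of \eqref{eq:slow_manifold} solves \eqref{inv_eqn} with a remainder of order $\eps^{-3N-2}\e^{-(N+1)\eps^{-1}(1-\theta)}$, which is exponentially smaller than the first omitted term and so justifies the series as a genuine asymptotic expansion of $h$.
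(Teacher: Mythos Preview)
Your proof is correct and follows essentially the same approach as the paper: introduce an auxiliary variable carrying the exponential, embed the system in a smooth extended phase space, apply centre-manifold theory along the line of equilibria, expand the centre manifold as a power series in the auxiliary variable, derive the recursion, and restrict to the invariant submanifold to recover the expansion for $S_\eps$. The only difference is cosmetic: the paper uses $\tilde\eta=\eps^{-2}\eta$ and $\zeta=\tfrac12\eps^{-4}\e^{-\eps^{-1}(1-\theta)}$, obtains coefficients $G_k$ satisfying a recursion with an overall factor of $\eps$, and then rescales $G_k=F_k\eps^{k-1}$; your choice $\widehat\eta=\eps^{-1}\eta$, $v=\eps^{-3}\e^{-\eps^{-1}(1-\theta)}$ lands directly on the $F_k$-recursion after the trivial rescaling $F_k=2^kH_k$, which is marginally cleaner.
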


\begin{proof}
	As already noted, the slow manifold is flat with respect to $\epsilon\rightarrow 0$ and therefore we cannot determine the flow on the slow manifold in the usual way. Inspired by \cite{Kristiansen2017}, we proceed instead by looking for a way to rewrite \eqref{eq:ZFK_system} as an extended system in which the flat term $\frac12 \epsilon^{-2}\theta(1-\theta) \e^{-\epsilon^{-1}(1-\theta)}$ (or some scalings thereof) defines a new variable. In the present case, it turns out to be useful to  introduce $\tilde \eta$ and $\zeta$ as follows: 
	\begin{align}\label{eta2}
		\tilde \eta=\epsilon^{-2} \eta,\quad \zeta=\frac12  \epsilon^{-4} \e^{-\epsilon^{-1}(1-\theta)}. 
	\end{align}
	In this way, we can eliminate the flat terms by working in the extended $(\theta,\tilde \eta,\zeta)-$space where the equations are given by
	\begin{equation}\label{this2}
		\begin{aligned}
			\dot \theta &= \epsilon^2 \tilde \eta,\\
			\dot {\tilde \eta} &=c\tilde \eta-\theta(1-\theta) \zeta,\\
			\dot \zeta & = \epsilon \tilde \eta \zeta.
		\end{aligned}
	\end{equation}
	Notice that the set
	\begin{align}\label{invset}
		\mathcal Q := \left\{ (\theta,\tilde \eta,\zeta)\,:\,\zeta = \frac12 \epsilon^{-4} \e^{-\epsilon^{-1}(1-\theta)} \right\},
	\end{align}
	defines an invariant set for all $\epsilon\ge 0$, upon which \eqref{this2} reduces to \eqref{eq:ZFK_system}. System \eqref{this2} has a critical manifold defined by $\tilde \eta = \frac{1}{c} \theta(1-\theta)\zeta$, which is normally hyperbolic and repelling. Fenichel theory implies the existence of a slow manifold for system \eqref{this2} of the form
	$\tilde \eta = \frac{1}{c} \theta(1-\theta)\zeta+\mathcal O(\epsilon)$ on compact subsets ($\theta \in [0, 1 - \kappa]$ as in \eqref{eq:S_eps}), for all $0<\epsilon\ll 1$. 
	
	At the same time, $\tilde \eta=\zeta=0$ is also invariant for \eqref{this2}, and the linearization about any point in this set gives a single nonzero eigenvalue $c$ for all $\epsilon\ge 0$. The center space at a point $(\theta_0,0,{0})$ is spanned by the vectors $(1,0,0)^T$, $(0,-\theta_0(1-\theta_0),c)^ T$. We therefore have a center manifold that is a graph $\tilde \eta=G(\theta,\zeta;\epsilon)$ over $(\theta,\zeta)$. In particular, $G$ can be expanded as a formal series in $\zeta$ having $(\theta,\epsilon)$-dependent coefficients:
	\begin{align*}
		\tilde \eta = \sum_{k=1}^\infty G_k(\theta,\epsilon) \zeta^k.
	\end{align*}
	We find that each $G_k$, $k\ge 2$, is uniquely determined by the recursion relation
	\begin{align*}
		cG_k = \epsilon\sum_{j=1}^{k-1} \left(\frac{\partial}{\partial \theta}G_j(\theta,\epsilon)\epsilon+jG_j(\theta,\epsilon) \right) G_{k-j}(\theta,\epsilon),\quad k\ge 2.
	\end{align*}
	We have $G_1(\theta,\epsilon)=\frac{1}{c} \theta(1-\theta)$, $G_k(\theta,0)\equiv 0$ for all $k\ge 2$, and the center manifold is therefore a slow manifold ({i.e.~\SJ{a locally invariant} set that is $\mathcal O(\epsilon)$-close to the critical manifold}). 
	
	Making the ansatz $G_k=F_k \epsilon^{k-1}$, $k\ge 1$, 
	we find that $\epsilon^{k-1}$ cancels on both sides of the recursion relation and obtain the following equation for $F_k=F_k(\theta,\epsilon)$:
	\begin{align}\nonumber
		cF_k = \sum_{j=1}^{k-1} \left(\frac{\partial}{\partial \theta}F_j(\theta,\epsilon)\epsilon+jF_j(\theta,\epsilon) \right) F_{k-j}(\theta,\epsilon),\quad k\ge 2.
	\end{align}
	{This leads to the following expansion of the slow manifold of \eqref{this2}}:
	\begin{align}\label{eq:expansionG}
		\tilde \eta=G(\theta,\zeta;\epsilon)=\sum_{k=1}^\infty F_k(\theta;\epsilon)\epsilon^{k-1} \zeta^k.
	\end{align}
	We then take the intersection of \eqref{eq:expansionG}  {with the invariant set $Q$, recall \eqref{invset}}, and project the result back onto the $(\theta,\eta)$-plane using \eqref{eta2}. This leads to a slow manifold of \eqref{eq:ZFK_system} of the form
	\[
	\eta=\epsilon^2 G\left(\theta,\frac12 \epsilon^{-4} \e^{-\epsilon^{-1}(1-\theta)};\epsilon\right),
	\]
	which together with the expansion in \eqref{eq:expansionG} completes the result.
\end{proof}

Restricting system \eqref{eq:ZFK_system} to $S_\eps$, we find that
\[
\dot \theta \big|_{S_\eps} = \frac{1}{2 c \eps^2} \theta (1 - \theta) \e^{- \eps^{-1} (1 - \theta)} + \mathcal O \left( \eps^{-5} \e^{- 2 \eps^{-1} \theta (1 - \theta)} \right) 
\]
as $\eps \to 0$. {This shows that $\theta$ is increasing between $0$ and $1-\kappa\in (0,1)$ for all $\epsilon\in (0,\epsilon_0(\kappa))$ (i.e.~between $p_-$ and $p_+$ as $\eps\to 0$), as indicated in Fig. \ref{fig:singular_limit}.}

\begin{remark}
	The critical manifold $S$ can be extended up to $\theta = 1$, i.e.~up to $p_+$, but Fenichel theory cannot be applied near $p_+$ because the right-hand side of system \eqref{eq:ZFK_system} {is not a $C^1$-perturbation on $\theta\in [0,1]$ (only on a compact subset, see Fig. \ref{fig:graph})}. This is also evident from \eqref{eq:slow_manifold}, which shows that the validity of the series expansions for $S_\eps$ breaks down near $\theta = 1$.
\end{remark}

\begin{remark}
	A more ad-hoc way of arriving at the leading order asymptotics in the second line of \eqref{eq:slow_manifold} is to substitute the ansatz
	\begin{equation}
		\label{eq:ansatz}
		h(\theta, \eps) = \frac{1}{2 c \eps^2} \theta (1 - \theta) \e^{\eps^{-1} (1 - \theta)} + \tilde h(\theta, \eps)
	\end{equation}
	into \eqref{eq:ZFK_system}, and then use formal matching of terms to arrive at
	\[
	\tilde h(\theta, \eps) = \mathcal O \left( \eps^{-5} \e^{-2 \eps^{-1} (1 - \theta)} \right).
	\]
	We present a more detailed statement in Proposition \ref{prop:slow_manifold} in order to demonstrate the utility of a more general approach which is (i) of independent interest as a dynamical systems approach to the derivation of asymptotic series for flat slow manifolds, and (ii) applicable in situations where the ansatz \eqref{eq:ansatz} is not `obvious'.
\end{remark}

\subsection{Equations in the reactive-diffusive zone}
\label{sub:inner_equations}

In order to understand the dynamics in the reactive-diffusive zone, i.e.~close to $\Sigma$, we introduce the following coordinate translation and rescaling:
\[
\theta = 1 + \eps \theta_2.
\]
Inserting this into system \eqref{eq:ZFK_system} leads to
\begin{equation}
	\label{eq:K2_eqns}
	\begin{split}
		\theta_2' &= \eta , \\
		\eta' &= \frac{1}{2} \theta_2 \e^{\theta_2} + \eps \left( c \eta + \frac{1}{2} \theta_2^2 \e^{\theta_2} \right) ,
	\end{split}
\end{equation}
where the \SJ{prime} now denotes differentiation with respect to $z_2 = z / \eps$, and the subscript \SJ{``}2\SJ{"} is chosen in order for consistency with the usual notation for dynamics in the `rescaling chart' in geometric blow-up analyses (our primary tool in Section \ref{sec:proof}).

\begin{figure}[t!]
\centering
\includegraphics[scale=0.9]{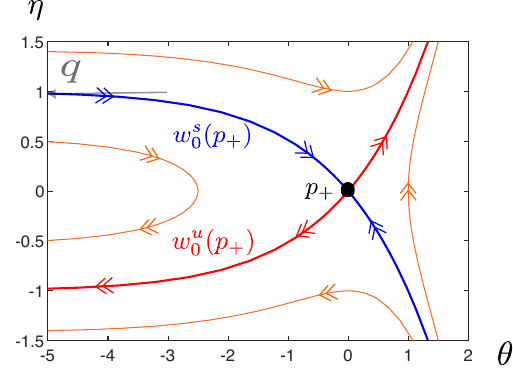}
\caption{Phase portrait for the limiting system \eqref{eq:K2_eqns_limiting} in the reactive-diffusive zone. The point $p_+$ is a resonant saddle with global stable/unstable manifolds $w_0^{s/u}(p_+)$ (shown in blue/red) given by \eqref{eq:stable_manifold}. We use a left-pointing arrow to indicate that the stable manifold \SJ{${w_0^s}(p_+)$} is asymptotic to the point $q$ as $\theta_2 \to \infty$.}
\label{fig:K2_singular_limit}
\end{figure}

System \eqref{eq:K2_eqns} is a regular perturbation problem, and the limiting system
\begin{equation}
\label{eq:K2_eqns_limiting}
\begin{split}
\theta_2' &= \eta , \\
\eta' &= \frac{1}{2} \theta_2 \e^{\theta_2} ,
\end{split}
\end{equation}
is Hamiltonian with solutions contained in level sets $H(\theta_2, \eta) = const.$, where
\begin{equation}
\label{eq:Hamiltonian}
H(\theta_2, \eta) =  \frac{\eta^2}{2} - \frac{\theta_2 - 1}{2} \e^{\theta_2} .
\end{equation}
System \eqref{eq:K2_eqns_limiting} has a hyperbolic saddle at $p_+ : (0,0)$, with eigenvalues $\lambda_\pm = \pm 1 / \sqrt 2$. Note that we permit a slight abuse of notation here; earlier we defined $p_+$ in \eqref{eq:equilibria} in $(\theta, \eta)$-coordinates. Using $H(0,0) = 1/2$ we find that the the global stable and unstable manifolds of $p_+$ are given by
$w_{0}^{s/u}(p_+) = \{ (\theta_2, h^{s/u}(\theta_2) : \theta_2 \in \R \}$, where
\begin{equation}
\label{eq:stable_manifold}
h^s(\theta_2) = -\operatorname{sign}(\theta_2) \sqrt{1 + (\theta_2 - 1) \e^{\theta_2} } , \quad
h^u(\theta_2) = \operatorname{sign}(\theta_2) \sqrt{1 + (\theta_2 - 1) \e^{\theta_2} } .
\end{equation}
Notice that $h^s(\theta_2) \to 1$ as $\theta_2 \to -\infty$. This suggests that we look for solutions in the {convective}-diffusive zone which connect to the point $q := (0,1) \in \Sigma$. The geometry and dynamics described above are sketched in Fig. \ref{fig:K2_singular_limit}.

\subsection{Singular heteroclinic orbits}

Using the above, we may construct an entire family of singular heteroclinic orbits. We define
\begin{equation}
\label{eq:sing_orbits}
\Gamma(c) := \Gamma^0(c) \cup \Gamma^1(c) \cup \Gamma^2 , \qquad c \in [1, \infty) ,
\end{equation}
where
\begin{equation*}
\begin{split}
\Gamma^0(c) &:= \left\{ (\theta, 0) : \theta \in \left[ 0, 1 - c^{-1} \right] \right\} , \\
\Gamma^1(c) &:= \left\{ (\theta, c \theta + 1 - c) : \theta \in \left[1 - c^{-1}, 1 \right) \right\} , \\
\Gamma^2 &:= \left\{ (0, \eta) : \eta \in [0, 1] \right\} .
\end{split}
\end{equation*}
Notice that {we only consider singular orbits $\Gamma(c)$ with $c \geq 1$, since connections for $c<1$ enter the nonphysical domain $\theta<0$.} In this sense, there is a minimum wave speed ($c=1$) for $\eps=0$. {The singular orbit $\Gamma(1)$, corresponding to the minimum wave speed, is distinguished by the fact that $\Gamma^0(1) = p_-$ and corresponds to a strong connection{; recall the discussion prior to Remark \ref{rem:strong_weak_heteroclinic}}.} In this case, there are only two components:
\[
\Gamma(1) = \Gamma^1(1) \cap \Gamma^2 .
\]
We sketch singular orbits with $c = 1$ and $c > 1$ in Fig. \ref{fig:singular_orbits}. The connections $\Gamma(c)$ with $c>1$ correspond to weak connections, {with the additional segment $\Gamma^0(c)$ corresponding to a slow segment along the critical manifold $S$}. 

\begin{figure}[t!]
\centering 
\subfigure[$c = 1$]{\includegraphics[scale=0.3]{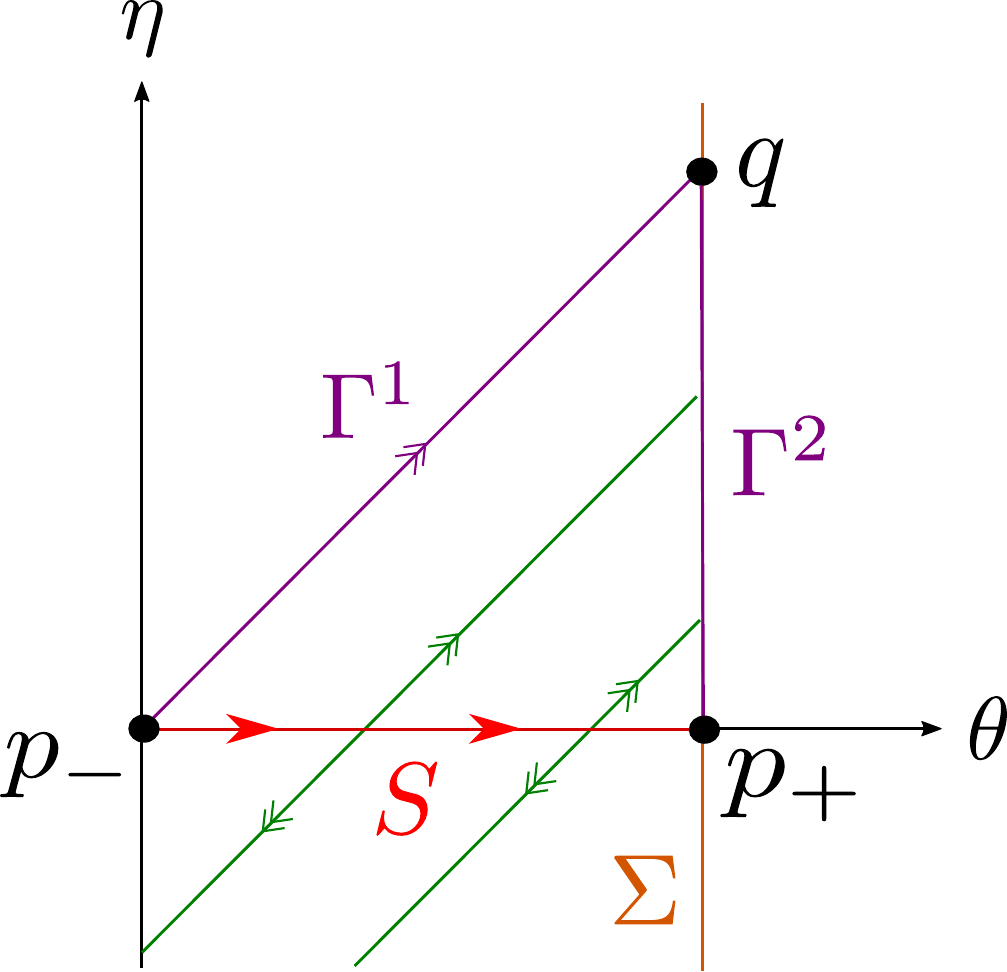}} \qquad
\subfigure[$ c > 1$]{\includegraphics[scale=0.3]{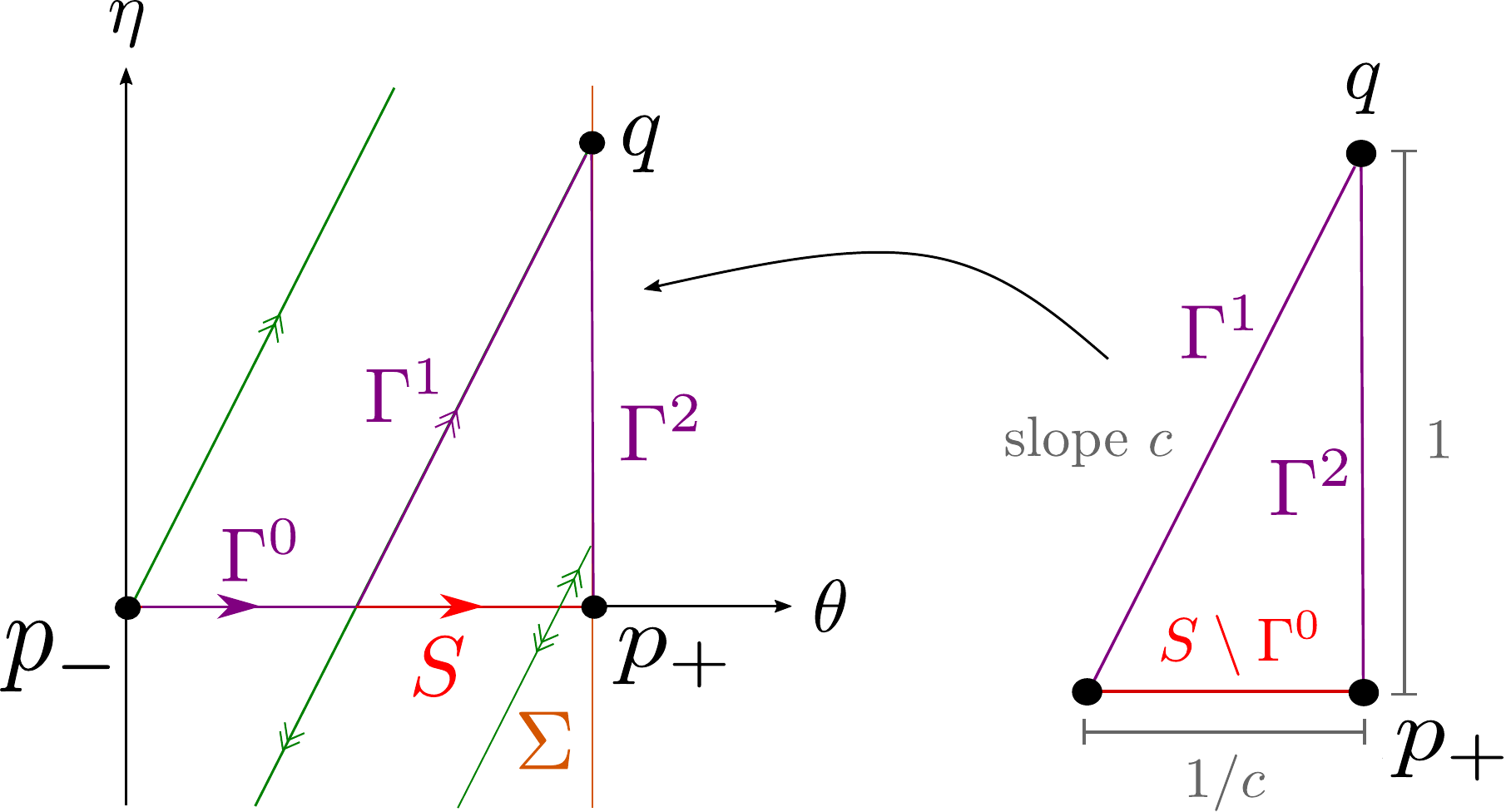}}
\caption{Two representative singular heteroclinic orbits $\Gamma(c)$ (with components shown in purple), as defined in \eqref{eq:sing_orbits}, for two different cases depending on the wave speed $c$. Left: Minimal wave speed $c = 1$, for which $\Gamma(1) = \Gamma^1(c) \cup \Gamma^2$ has only two distinct sections. Center: A fixed (non-minimal) wave speed $c > 1$, for which $\Gamma(c) = \Gamma^0(c) \cup \Gamma^1(c) \cup \Gamma^2$ has three distinct sections. Right: Schematic representation of the dimensions of the triangle with sides $\Gamma^1(c)$, $\Gamma^2$ and $S \setminus \Gamma^0(c)$, showing how $\Gamma^1(c)$ and $S \setminus \Gamma^0(c)$ (and therefore $\Gamma^0(c)$) change with $c$.}
\label{fig:singular_orbits}
\end{figure}

\section{{The main result}}
\label{sec:main_result}

We are now in a position to state the main result, namely, the existence of a family of heteroclinic solutions for all $\eps > 0$ sufficiently small in system \eqref{eq:ZFK_system} which converge to the (non-smooth) singular orbits $\Gamma(c)$ as $\eps \to 0$.

\begin{thm}
\label{thm:main}
Consider the ZFK system \eqref{eq:ZFK_system}. {Fix $\kappa>0$ small enough and $\sigma>0$ large enough}. Then there exists an $\eps_0 = \eps_0(\sigma) > 0$ and a {$C^\infty$-smooth function $\overline c : [0,\eps_0) \to \R$, satisfying 
%
\begin{equation}
\label{eq:wave_speed}
\overline c(0) = {1} ,\quad \overline c'(0) =  \int_{-\infty}^0 \left(1-h^s(x)\right) \dd x-1 \approx 0.34405>0,
\end{equation}
recall the definition of $h^s$ in \eqref{eq:stable_manifold},
such that the following assertions hold true for each $\epsilon \in (0,\eps_0)$}:
\begin{enumerate}
\item[(i)] There are no heteroclinic connections between $p_-$ and $p_+$ within {$\theta\in[0,1]$} for $c<\overline c(\eps)$.
\item[(ii)] There is a strong heteroclinic connection when $c = \overline c(\eps)$, whose 
corresponding orbit converges to $\Gamma(1)$ in Hausdorff distance as $\eps \to 0$.
\item[(iii)] There is a weak heteroclinic connection for each $c \in (\overline c(\eps), \sigma]$, and for each fixed $c\in (1+\kappa, \sigma]$, the corresponding orbit converges to $\Gamma(c)$ in Hausdorff distance as $\eps \to 0$.
\end{enumerate}
\end{thm}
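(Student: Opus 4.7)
The plan is to resolve the loss of smoothness of system \eqref{eq:ZFK_system} along $\Sigma$ via a geometric blow-up centered at the degenerate point $(p_+, 0) \in \R^2 \times \R_{\geq 0}$. In the rescaling chart (already implemented in Section \ref{sub:inner_equations} under the label $K_2$), the blown-up equations become the regular perturbation problem \eqref{eq:K2_eqns}, whereas in an entering directional chart $K_1$ (e.g.~$\theta = 1 - \eps r_1$) the blown-up equations match at $\eps = 0$ with the convective-diffusive limit \eqref{eq:ZFK_layer_problem} and provide a smooth passage from the Fenichel regime to the rescaling regime. With the blow-up in place, the proof reduces to tracking three invariant objects and detecting their intersections: the strong unstable manifold $W^{uu}_\eps(p_-)$ of Lemma \ref{lemma:strong}, the slow manifold $S_\eps$ of Proposition \ref{prop:slow_manifold} and its continuation through $K_1$, and the stable manifold $W^s_\eps(p_+)$, which in $K_2$ is an $O(\eps)$ perturbation of the separatrix $\eta = h^s(\theta_2)$ from \eqref{eq:stable_manifold}.

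For assertion (iii) with $c \in (1+\kappa, \sigma]$, I would extend $S_\eps$ forward through $K_1$ into $K_2$ using smooth dependence on $\eps$; in the singular limit the extension enters $K_2$ at $(\theta_2, \eta) = (-\infty, c)$, and then follows (at $\eps = 0$) the level set $H = c^2/2$ of the Hamiltonian \eqref{eq:Hamiltonian}. For $c > 1$ this level lies strictly above $w^s_0(p_+) = \{H = 1/2\}$ and intersects it transversally, so the implicit function theorem (applied on a fixed section in $K_2$) yields, for each such $c$ and all $0 < \eps \ll 1$, a unique weak heteroclinic orbit $\gamma_{c,\eps}$ which converges in Hausdorff distance to $\Gamma(c)$ as $\eps \to 0$. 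Assertion (i) follows from a singular geometric obstruction: for $c < 1$ any trajectory leaving $p_-$ enters $K_2$ at $\eta < 1 = \max w^s_0(p_+)$, lies strictly below the separatrix, and therefore escapes through $\theta_2 \to +\infty$ (i.e.~through $\theta > 1$); the $O(\eps)$ correction to this obstruction yields the bound $c < \overline c(\eps)$.

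For assertion (ii), I would define a signed distance function $D(c,\eps)$ measuring the deviation of $W^{uu}_\eps(p_-)$ from $W^s_\eps(p_+)$ on a fixed transverse section in $K_2$ (say, $\{\theta_2 = -T\}$ for some large fixed $T$). Tracking $W^{uu}_\eps(p_-)$ through $K_1$ into $K_2$ using Lemma \ref{lemma:strong}, its singular limit at $c=1$, $\eps=0$ is precisely $\Gamma^1(1) \cup \Gamma^2$, which lies entirely in $w^s_0(p_+)$, so $D(1,0) = 0$; a direct computation in the singular limit (using that at this section the two curves differ by $\approx c - 1$) gives $\partial_c D(1,0) \neq 0$, hence the implicit function theorem produces $\overline c(\eps)$ with $D(\overline c(\eps),\eps) = 0$. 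The $C^\infty$ property of $\overline c$ requires $C^\infty$ dependence of $D$ on $(c,\eps)$; since the node $p_-$ contribution is $C^\infty$ by Lemma \ref{lemma:strong} and the flow in $K_1$ and the regular part of $K_2$ are smooth in $\eps$, the only delicate point is the smooth continuation of $W^s_\eps(p_+)$ through the resonant saddle at $p_+$ (eigenvalues $\pm 1/\sqrt{2}$). Here I would invoke the smooth normal form for resonant saddles from \cite{DeMaesschalck2016} to secure the required $C^\infty$ dependence.

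To compute $\overline c'(0)$ I would perform a Melnikov-type calculation in $K_2$. Along the unperturbed separatrix $\eta = h^s(\theta_2)$, one computes
\[
\frac{\dd H}{\dd z_2} = \eps\, \eta\Big(c\eta + \tfrac{1}{2}\theta_2^2 e^{\theta_2}\Big)
\]
from \eqref{eq:K2_eqns}, and integrating from $z_2 = -\infty$ up to a section at $\theta_2 = 0$ (matched to $\Gamma^1(1)$ through $K_1$) yields an explicit integral expression for $\partial_\eps D(1,0)$. The contribution from the $K_1$-matching (which encodes the $O(\eps)$ displacement of $W^{uu}_\eps(p_-)$ relative to the fast fiber of slope $c$) is responsible for the $-1$ term in \eqref{eq:wave_speed}, while the integrated $H$-drift produces the integral $\int_{-\infty}^0 (1-h^s(x))\,\dd x$. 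The main obstacle will be the smoothness step in (ii): the resonance at $p_+$ rules out standard smooth linearisation, and establishing $C^\infty$ dependence of $W^s_\eps(p_+)$ through the passage near $p_+$ is the most technical part of the argument; once that is in place, the Melnikov calculation and the implicit function theorem are essentially routine bookkeeping.
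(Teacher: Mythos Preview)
Your overall plan---blow-up along $\Sigma$, track invariant manifolds to a common section, solve a bifurcation equation via the implicit function theorem, and compute $\overline c'(0)$ by a Melnikov-type integral---is the same as the paper's. However, you have mislocated the main technical obstruction, and this propagates through the argument.

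First, the chart you call $K_1$, namely $\theta=1-\eps r_1$, is not a directional chart of the blow-up; it is just $K_2$ with $\theta_2=-r_1$. The genuine entry chart is $K_1:\theta=1-r_1,\ \eps=r_1\eps_1$. In these coordinates the set $L=\{r_1=\eps_1=0\}$ is a line of equilibria, and the point $q=(r_1,\eta,\eps_1)=(0,1,0)$ on $L$ is a resonant saddle with eigenvalues $-1,0,1$ (after desingularisation). \emph{This} is where the smoothness issue lives: the map taking $W^{uu}_\eps(p_-)$ from $\{r_1=\rho\}$ across to $\{\eps_1=\delta\}$ is a passage past $q\in L$, and resonance would in general produce $\log\eps$ terms. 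The paper removes these by exploiting the flatness of the remainder in $\eps_1$ and then invoking the normal form of \cite{DeMaesschalck2016}, exactly the reference you cite---but applied at $q$, not at $p_+$.

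By contrast, there is no smoothness issue at $p_+$: in the $K_2$ system \eqref{eq:K2_eqns} the point $p_+$ is a hyperbolic saddle uniformly in $\eps$, and its stable manifold depends $C^\infty$-smoothly on $\eps$ on compact sets $\theta_2\in[-T,0]$ by the standard stable manifold theorem. So on your section $\{\theta_2=-T\}$, it is $W^{uu}_\eps(p_-)$ whose $C^\infty$ dependence on $\eps$ is delicate (it must be tracked from $\theta=1-\rho$ to $\theta=1-T\eps$, i.e.\ through $K_1$), not $W^s_\eps(p_+)$. Your proof as written would therefore establish smoothness of the wrong piece and leave the actual gap open.

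A smaller issue: your argument for (iii) is garbled. The forward extension of $S_\eps$ does not enter $K_2$ at $\eta=c$ (it enters with $\eta$ exponentially small), and distinct level sets $\{H=c^2/2\}$ and $\{H=1/2\}$ of a Hamiltonian cannot ``intersect transversally''. The object that enters $K_2$ at $\eta\approx c$ is $W^{uu}_\eps(p_-)$, not $S_\eps$. The clean way to get (iii) is to track $W^s_\eps(p_+)$ \emph{backward}: through $K_1$ it exits near $q$ with $\eta\approx 1$, then follows the fast fibre $\Gamma^1(c)$ down to the repelling slow manifold $S_\eps$ near $\theta=1-c^{-1}$, and finally drifts back to $p_-$ along $S_\eps$ by Fenichel theory. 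This automatically gives the weak connection and its Hausdorff convergence to $\Gamma(c)$.
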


The heteroclinic connection described by Theorem \ref{thm:main} corresponds to a travelling wave solution of the ZFK equation \eqref{eq:ZFK_eqn}. In essence, Theorem \ref{thm:main} provides a rigorous justification of formal asymptotic results that have been derived for similar problems using HAEA, most notably the existence of travelling wave solutions for each fixed $c$ greater than a minimum wave speed satisfying $c = \overline c(\eps) \sim 1$; we refer again to \cite{Clavin2016,Williams1985} and the references therein. Assertion (i) shows that there is minimum wave speed for $c = \overline c(\eps)$, and Assertions (ii) and (iii) describe the asymptotic properties of the wave profile as $\eps \to 0$. These two assertions describe an important qualitative difference between the cases $c = \overline c(\eps)$ and $c > \overline c(\eps)$, namely, that they correspond to profiles with two vs.~three distinct components as $\eps \to 0$ respectively; see again Fig. \ref{fig:singular_orbits}. 
The asymptotics of the `extra' component close to $\Gamma^0(c)$ when $c > \overline c(\eps)$ are described by the slow manifold asymptotics in Proposition \ref{prop:slow_manifold}. To the best of our knowledge, the smoothness of the minimum wave speed function $\overline c(\eps)$ has also not been proven. Finally, we emphasize that
\[
\overline c(\epsilon)>1\quad \forall\,0<\epsilon\ll 1,
\]
cf.~\eqref{eq:wave_speed}, \SJ{and} \KUK{that the HAEA approach in Section \ref{sec:HAEA} does not identify the weak connections}.

In Figs.~\ref{fig:num1} and \ref{fig:num2} we illustrate the results of numerical computations. The results are in \PS{good} agreement with Theorem  \ref{thm:main}. In particular, in Fig.~\ref{fig:num1} we show the stable manifold $W^s(p_+)$ (magenta) and the strong unstable manifold $W^{uu}(p_-)$ (green) for $\eps=0.01$ and two different $c$-values: $c=1.5$ in (a) and $c=\overline c(\epsilon)$ (using the linear approximation $\overline c(\epsilon) \approx 1+0.34405\times \epsilon$ provided by \eqref{eq:wave_speed}) in (b). The results were obtained using the linear approximations offered at $p_\pm$ and Matlab's ODE45 with low tolerances ($10^{-12}$). In (a), with $W^s(p_+)$ lying below $W^{uu}(p_-)$, we have a weak connection. The resulting profile is shown in Fig.~\ref{fig:num2} (dashed line). Due to exponentially slow flow along $S$, recall Proposition \ref{prop:slow_manifold}, the decay $\theta(z)\rightarrow 0$ for $z\rightarrow -\infty$ is slow and it is not visible in Fig.~\ref{fig:num2}(a). Fig.~\ref{fig:num1}(b) shows that the linear approximation offered by \eqref{eq:wave_speed} for $\overline c(\eps)$ is accurate for this value of $\eps$. In fact, $W^{uu}(p_-)$ and $W^s(p_+)$ \SJ{are} inseparable on the scale shown (being of the order $\sim 10^{-4}$). The resulting profile is shown in Fig.~\ref{fig:num2} (full line). See figure captions for further details.

\begin{figure}[t!]
\centering             
\subfigure[$c=1.5$]{\includegraphics[scale=0.5]{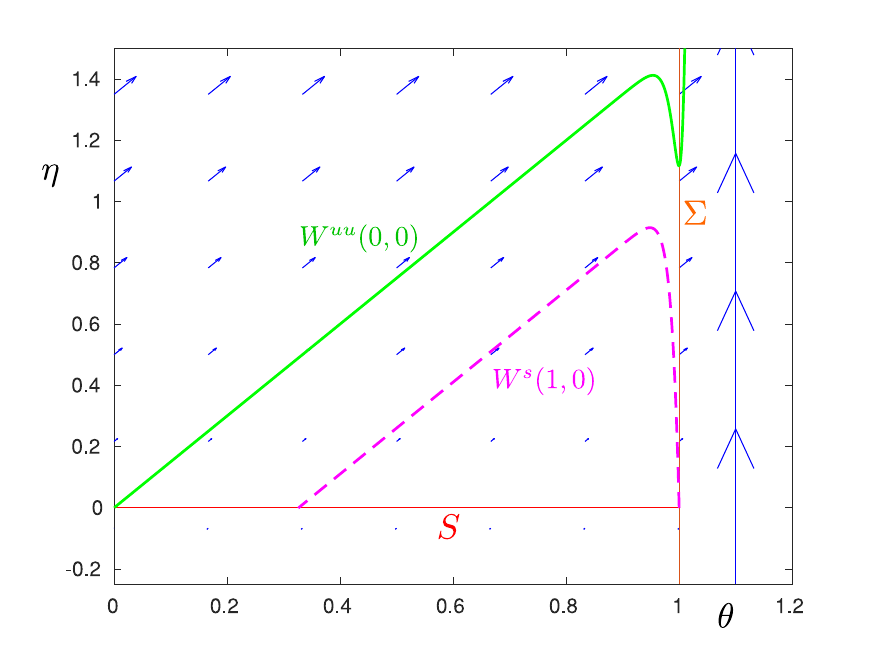}} \qquad
\subfigure[$c=\overline c(\epsilon)$]{\includegraphics[scale=0.505]{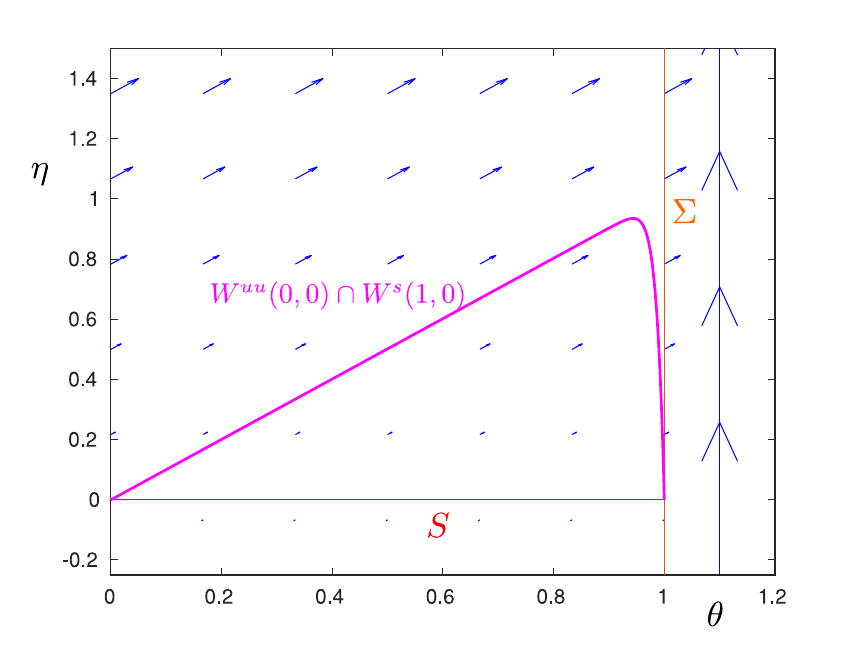}}
\caption{$W^s(1,0)$ for $c=1.5$ (in (a)) and $c=\overline c(\epsilon)$ (in (b)) for $\epsilon=0.01$. We have used the linear approximation $\overline c(\epsilon) \approx 1+0.34405\times \epsilon$ provided by \eqref{eq:wave_speed} in (b). The vector-field is illustrated in blue; notice that for illustrative purposes, the arrows for $\theta>1$ have been moderated.  }
\label{fig:num1}
\end{figure}
\begin{figure}[t!]
\centering             
\subfigure[$c=1.5$]{\includegraphics[scale=0.5]{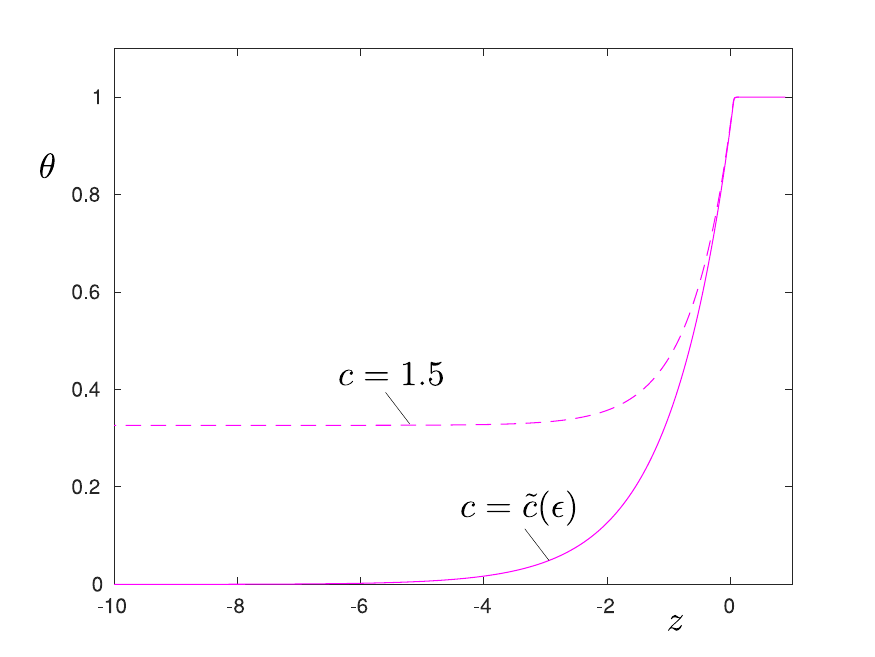}} \qquad
\subfigure[$c=\overline c(\epsilon)$]{\includegraphics[scale=0.5]{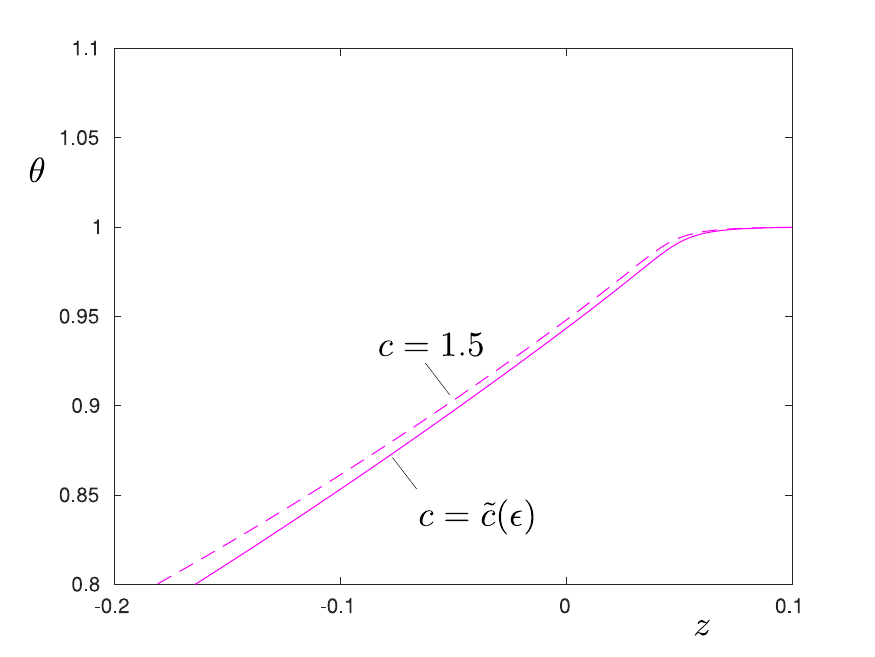}}
\caption{Profile $\theta(z)$ for $c=1.5$ (dashed) and $c=\overline c(\epsilon)$ (full line), with $\epsilon = 0.01$. (b) is a zoom of (a) around the sharp decay towards $\theta=1$. For $c=1.5$ the decay towards $\theta\rightarrow 0^+$ for $z\rightarrow -\infty$ is extremely slow.}
\label{fig:num2}
\end{figure}

\begin{remark}
\label{rem:c_selection}
The travelling wave corresponding to the minimum wave speed $c = \overline c(\eps)$ has received the most attention in the HAEA literature \cite{Clavin2016,Williams1985}, primarily because it is expected to be stable, see e.g.~\cite{Avery2022} for recent work on the so-called \textit{marginal stability conjecture}. We do not address questions of stability or selection in this work. 
\end{remark}


\section{\PS{Geometric blow-up and proof}}
\label{sec:proof}


As usual \PS{in approaches based on blow-up}, we begin by considering the extended system{
	\begin{equation}
		\label{eq:ZFK_system_extended}
		\begin{split}
			\dot \theta &= \eta, \\
			\dot \eta &= c \eta - \frac{1}{2 \eps^2} \theta (1 - \theta) \e^{- \eps^{-1} (1 - \theta)} , \\
			\dot \eps &= 0 ,
		\end{split}
	\end{equation}
}which is obtained from system \eqref{eq:ZFK_system} by appending the trivial equation {$\dot \eps = 0$}. 
The extended system \eqref{eq:ZFK_system_extended} is not defined {at the interface} $\Sigma = \left\{ (1, \eta, 0) : \eta \in \R \right\}$, {or more generally on the set defined by $\theta > 1$, $\eps = 0$.} 
	{Note that we permit a slight abuse of notation by using the same notation for $\Sigma$ as in \eqref{eq:Sigma} (the two sets are naturally identified). {In order to obtain a well-defined system on $\theta > 1$, $\eps = 0$, we} follow \cite{Jelbart2019c} and ``normalise the equations" through division of the right-hand side by 
		\begin{equation}\label{eq:quantity}
			\epsilon^{-2} \left(1+\frac12 \e^{\epsilon^{-1}(\theta-1)}\right). 
		\end{equation}
		This corresponds to a 
		\SJ{state-dependent transformation of the independent variable $z$} for $\eps>0$; we denote the new \SJ{independent variable} by $s$.
		In this way, we obtain an equivalent system for $\eps>0$, having a well-defined piecewise smooth limit $\eps\to 0$, with $\Sigma$ being the discontinuity set/{switching manifold}. In particular,
		\begin{equation}\label{eq:fuck}
			\begin{cases} \frac{\dd \theta}{\dd s}&=0,\\
				\frac{\dd \eta}{\dd s} &=\theta(\theta-1),
			\end{cases}\quad \mbox{for}\quad \theta>1,
		\end{equation}
		for $\eps\to 0$.
		In line with \cite{Kristiansen2019c}, we then \textit{gain smoothness} along $\Sigma$ by applying the cylindrical blow-up transformation:
		\begin{equation}
			\label{eq:blow-up}
			r \geq 0 , \ (\bar \theta, \bar \eps) \in S^1\cap \{\bar \eps\ge 0\} \ \mapsto \ 
			\begin{cases}
				\theta = 1 + r \bar \theta , \\
				\eps = r \bar \eps ,
			\end{cases}
		\end{equation}
		which fixes $\eta$. Notice specifically that 
		$\e^{-\epsilon^{-1}(1-\theta)} = \e^{-\bar \eps^{-1} \bar \theta}$ under \eqref{eq:blow-up}, which defines a smooth function on $(\bar \theta, \bar \eps)\in S^1$ with $\bar \epsilon\ge 0$ and $\bar \theta\le 0$.
	}
	
	{To analyse the normalised equations under the blowup transformation \eqref{eq:blow-up},} we work in local coordinate charts defined by $K_1 : \bar \theta = - 1$ and $K_2 : \bar \eps = 1$.  Local coordinates in charts are related to the original coordinates $(\theta, \eta, \eps)$ by
	\[
	K_1 : (\theta, \eta, \eps) = (1 - r_1, \eta, r_1 \eps_1) , \qquad
	K_2 : (\theta, \eta, \eps) = (1 + \eps \theta_2, \eta, \eps ) ,
	\]
	{which we insert into \eqref{eq:ZFK_system_extended} (and apply appropriate desingularization to ensure that the system is well-defined and smooth)}. The local change of coordinates formulae are given by
	\begin{equation}
		\label{eq:coord_changes}
		\kappa_{12} : \ r_1 = - \eps \theta_2 , \quad 
		\eps_1 = - 1 / \theta_2 , \quad \theta_2 < 0 , \qquad 
		\kappa_{21} : \ \theta_2 = - 1 / \eps_1 , \quad 
		\eps = r_1 \eps_1 , \quad \eps_1 > 0 .
	\end{equation}
	The coordinates in chart $K_2$ are just the coordinates of the inner equations used in Section \ref{sub:inner_equations}, except that we now view this system in the extended $(\theta_2, \eta, \eps)$-space. {(In this way, \eqref{eq:blow-up} also compactifies \eqref{sub:inner_equations}.)} {Recall that \eqref{sub:inner_equations} is a smooth regular perturbation problem on compact subsets.} Thus, it remains to consider the (smooth) equations in chart $K_1$, where the matching {between the inner and outer system} occurs.

	\subsection{Singular geometry and dynamics in $K_1$}
	
	The equations in chart $K_1$ are given by
	\begin{equation}
		\label{eq:K1_eqns_original}
		\begin{split}
			r_1' &= - r_1 \eta , \\
			\eta' &= c r_1 \eta - \frac{1}{2} \eps_1^{-2} (1 - r_1) \e^{- \eps_1^{-1}} , \\
			\eps_1' &= \eps_1 \eta ,
		\end{split}
	\end{equation}
	after a transformation of time which amounts to multiplication of the right-hand side by $r_1$. {We permit a slight abuse of notation by using a \SJ{prime} to indicate differentiation with respect to the new independent variable (as opposed to the usage in system \eqref{eq:K2_eqns}).} {Notice that system \eqref{eq:K1_eqns_original} is smooth on $r_1\ge 0$, $\epsilon_1\ge 0$, as desired}{, and that} the planes defined by $r_1 = 0$ and $\eps_1 = 0$, as well as their intersection; the line $r_1 = \eps_1 = 0$, are invariant. In particular, the set
	\[
	L := \{ (0, \eta, 0) : \eta \in \R \}
	\]
	defines a line of resonant saddles for system \eqref{eq:K1_eqns_original}, with eigenvalues $-\eta$, $0$ and $\eta$ (except for a non-hyperbolic point at $(0,0,0)$, which does not play a role in the analysis). 
	
	The limiting dynamics from the {convective}-diffusive zone appear within $\{ \eps_1 = 0 \}$, and are governed by
	\begin{equation*}
		\begin{split}
			r_1' &= - r_1 \eta, \\
			\eta' &= c r_1 \eta.
		\end{split}
	\end{equation*}
	{Indeed, this system is equivalent to \eqref{eq:ZFK_layer_problem} on $r_1>0$ upon setting $r_1=1-\theta$.
		We therefore also re-discover the repelling critical manifold $S$ as}
	\[
	S_1 = \{ (r_1, 0, 0) : r_1 \geq 0 \} .
	\]
	Considered within $\{ \eps_1 = 0\}$, the Jacobian evaluated along $S_1$ has eigenvalues $0$ and $c r_1 \geq 0$. Trajectories are contained in straight lines of the form $\eta(r_1) = - c r_1 + \eta(0)$.
	%
	%
	On the other hand, the limiting dynamics in the reactive-diffusive zone appear within $\{r_1 = 0\}$. We are particularly interested in the extension of {${w_0^s}(p_{+})$} into chart $K_1$. Using \eqref{eq:stable_manifold} and \eqref{eq:coord_changes}, we obtain
	\begin{equation}
		\label{eq:w_1}
		{w_{0,1}^s(p_+)} := \kappa_{21} \left( {w_0^s}(p_+) \times \{0\} \right) = 
		\{ (0, h_1^s(\eps_1), \eps_1) : \eps_1 \geq 0 \} ,
	\end{equation}                                                                                                where $h_1^s(\eps_1): = \sqrt{ 1 - \eps_1^{-1} (1 + \eps_1) \e^{- \eps_1^{-1}} }${, the subscript ``1" indicates that we are viewing the object in chart $K_1$,} and the solution within {$w_{0,1}^s(p_+)$} is backward asymptotic to the point $q : (0,1,0) \in L$. The singular geometry and dynamics is summarised in Fig. \ref{fig:blow-up}.

\begin{figure}[t!]
\centering                                
\subfigure[]{\includegraphics[scale=0.28]{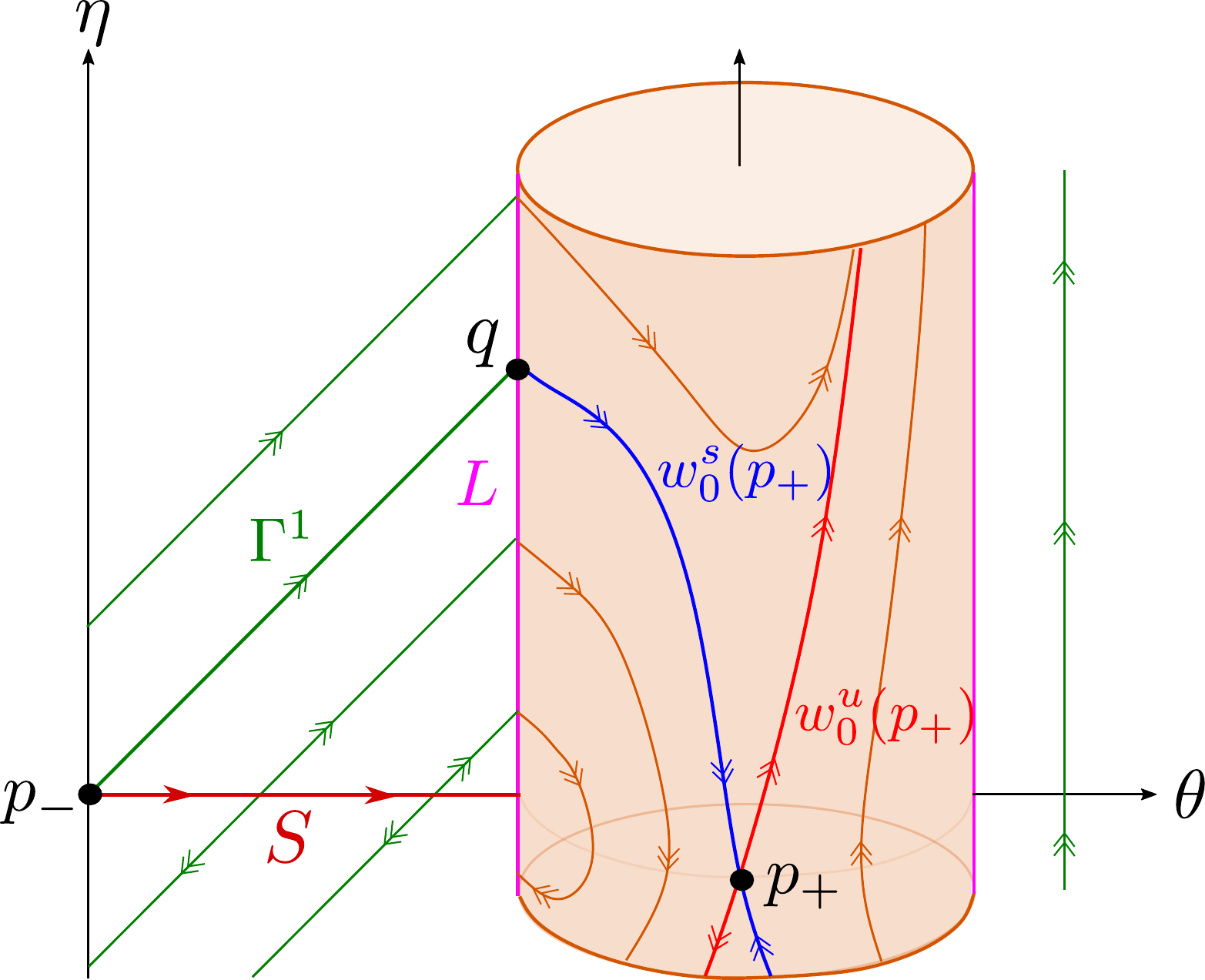}} \qquad 
\subfigure[]{\includegraphics[scale=0.28]{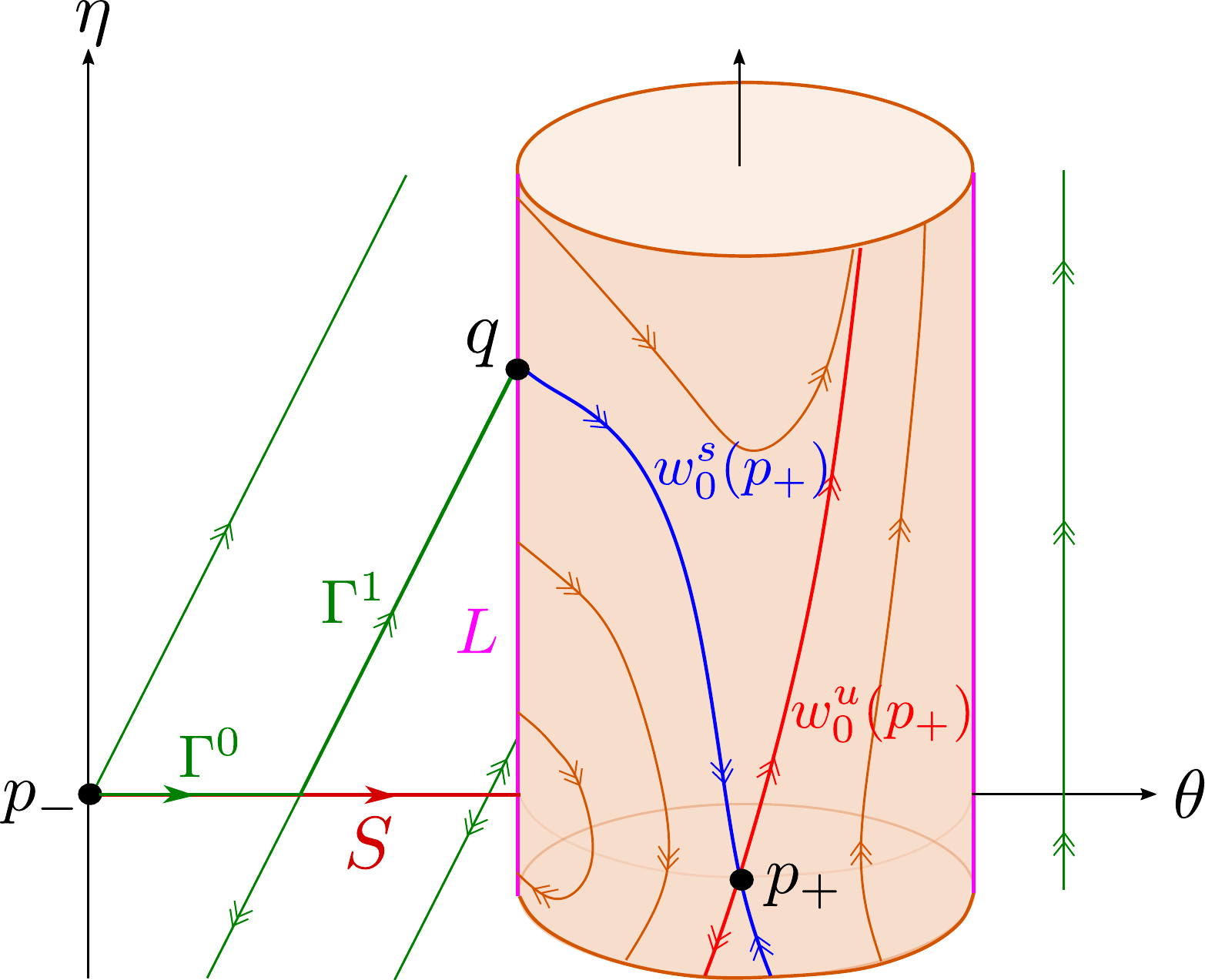}}
\caption{Singular geometry and dynamics after applying the blow-up transformation \eqref{eq:blow-up} and desingularising in order to resolve the loss of smoothness along $\Sigma$.  In contrast to earlier figures, we also indicate the dynamics for $\theta > 1$, which {is well-defined after the normalization of the system through division of \eqref{eq:ZFK_system_extended} by the quantity \eqref{eq:quantity}, see also \eqref{eq:fuck} and \cite{Jelbart2019c,Kristiansen2019c} for further details on this procedure.} The dynamics in the {convective}-diffusive and reactive-diffusive zones extend and `connect' along a line of hyperbolic saddles $L$, shown here in magenta, which contains the point $q$ identified as the asymptote of the stable manifold ${w_0^s}(p_+)$ in the Subsection \ref{sub:inner_equations}. As in Fig. \ref{fig:singular_orbits}, we show two different scenarios depending on whether $c = 1$ or $c > 1$. Left: $c = 1$, with the improved singular heteroclinic $\Gamma^1(1) \cup w_0^s(p_+)$. Right: $c > 1$, with the improved singular heteroclinic $\Gamma^0(c) \cup \Gamma^1(c) \cup w_0^s(p_+)$.}
\label{fig:blow-up}
\end{figure}

The preceding analysis allows for the construction of an improved family of singular heteroclinics
\begin{equation}
\label{eq:improved_singular_heteroclinics}
\Gamma(c) = \Gamma^0(c) \cup \Gamma^1(c) \cup \Gamma^2 ,
\end{equation}
considered now in the blown-up space. The definitions for $\Gamma^0(c)$ and $\Gamma^1(c)$ remain unchanged (after a natural embedding into the extended $(\theta, \eta, \eps)$-space or representation in chart $K_1$ coordinates), but $\Gamma^2$ is replaced by
\[
\Gamma^2 = q \cup {({w_0^s(p_+)} \times \{0\})}.
\]                                                                         
Improved singular heteroclinic orbits for $c = 1$ and $c > 1$ are shown in Fig. \ref{fig:blow-up}.

\subsection{Perturbation}

It remains to describe the perturbation of the improved singular heteroclinics in \eqref{eq:improved_singular_heteroclinics}. The main task is to understand the local passage near $q \in L$. To this end, we divide the right-hand side of system \eqref{eq:K1_eqns} by $\eta$ (which is positive close to $q$) and consider the flow induced by the system
\begin{equation}
\label{eq:K1_eqns}
\begin{split}
r_1' &= - r_1 , \\
\eta' &= c r_1 - \frac{1}{2} \eta^{-1} \eps_1^{-2} (1 - r_1) \e^{- \eps_1^{-1}} , \\
\eps_1' &= \eps_1 .
\end{split}
\end{equation}
System \eqref{eq:K1_eqns} can be brought into a kind of local normal form by straightening the fibers of the stable and unstable manifolds with base points along $L$, which are contained in the invariant planes $\{r_1 = 0\}$ and $\{\eps_1 = 0\}$ respectively. This can be achieved with an $(r_1,\epsilon_1)$-fibered diffeomorphism defined by
\begin{equation}
\label{this}
(r_1,y_1,\epsilon_1)\mapsto \eta=-cr_1 +f_1(y_1,\epsilon_1),
\end{equation}
where
\[
f_1(y_1,\epsilon_1) = y_1 \sqrt{1- y_1^{-2} \left(\eps_1^{-1} + 1\right)\e^{-\epsilon_1^{-1}}}
\]
and the inverse is given by
\begin{equation}
\label{eq:y1_inverse}
y_1 = \sqrt{(\eta + c r_1)^2 + \left(\eps_1^{-1} + 1 \right) \e^{-\epsilon_1^{-1}}} .
\end{equation}                                                                                                                                                                                              
Indeed, in these coordinates the sets defined by $(r_1,y_1,0),\,r_1\ge 0$ and $(0,y_1,\epsilon_1),\,\epsilon_1\ge 0$ with $y_1>0$ fixed are the stable and unstable manifolds of the point $(0,y_1,0)$; see Fig. \ref{fig:normal_form}. Here we have used the conservation of $H(-\eps_1^{-1},\eta)=\text{const.}$ within $r_1=0$ to write the fibers of the unstable manifold with base point $(r_1,y_1,\epsilon_1)=(0,y_1,0)$ as 
\begin{equation}
r_1=0,\quad H(-\epsilon_1^{-1}, \eta)=\frac{y_1^2}{2}. \label{y1def}
\end{equation}
Solving the last expression for $\eta$ gives $\eta=f_1(y_1,\epsilon_1)$. Notice that 
\begin{align} \label{f1prop}
f_1(1,\epsilon_1) = h^s(-\epsilon_1^{-1}),
\end{align} 
since $\eta = h^s(\theta_2)$ corresponds to the level set $H(\eta,\theta_2)=\frac12$ (i.e. $y_1=1$).
This leads to the following equations:
\begin{equation}\label{basecase}
\begin{aligned}
\dot r_1 &= -r_1,\\
\dot y_1 & = r_1  F_1(r_1,y_1,\epsilon_1,c), \\
\dot \epsilon_1& =\epsilon_1,
\end{aligned}
\end{equation}
where
\begin{equation}
\label{eq:F1}
F_1(r_1,y_1,\epsilon_1,c):=\frac12 y_1^{-1}  \epsilon_1^{-2} \e^{-\epsilon_1^{-1}} \frac{f_1(y_1,\epsilon_1)-c}{f_1(y_1,\epsilon_1)-cr_1 }.
\end{equation}
Notice that $F_1$ is \textit{infinitely flat} in $\epsilon_1\rightarrow 0$, i.e.~
\begin{equation}
\frac{\partial^n }{\partial \epsilon_1^n}F_1(r_1,y_1,0,c)=0,\label{infflat}
\end{equation}
for all $n\in \mathbb N$. We shall adopt the following notation: for any $N\in \mathbb N\cup \{\infty\}$, we let $j_N G(\cdot)$ denote the $N$th-order Taylor-jet of a $C^N$-smooth function $x\mapsto G(x)$ defined in a neighborhood of $x=0$. Then we can write \eqref{infflat} as $j_\infty F_1(r_1,y_1,\cdot,c) = 0$.

\begin{figure}[t!]
\centering                     
\includegraphics[scale=0.3]{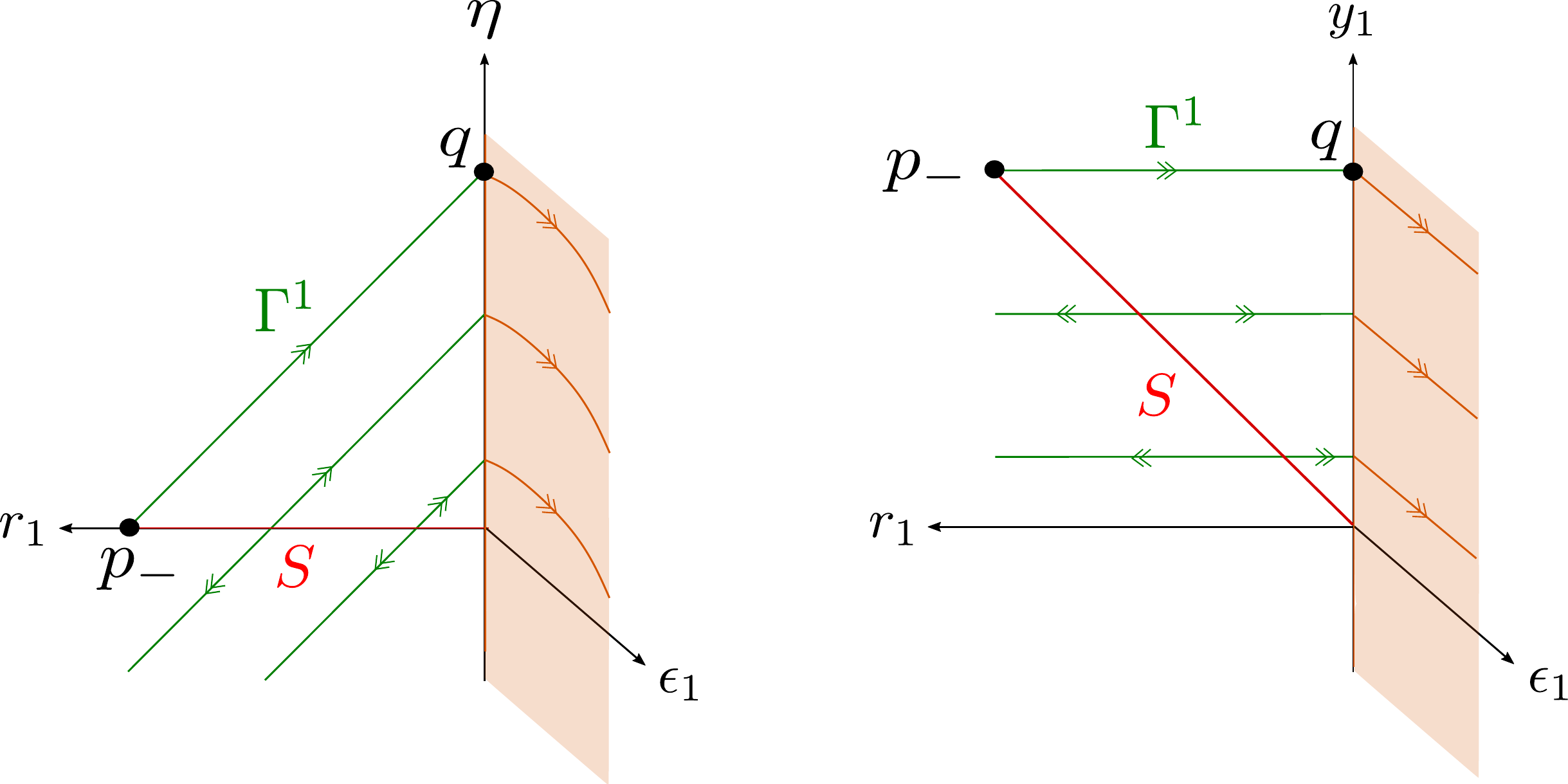}
\caption{Singular geometry and dynamics in chart $K_1$ before (left) and after (right) applying the normal form transformation \eqref{this}, which rectifies the stable and unstable manifolds within the invariant planes $\{ r_1 = 0\}$ and $\{\eps_1 = 0\}$ locally near $L$.}
\label{fig:normal_form}
\end{figure}

System \eqref{basecase} is similar (but not identical) to the normal form in \cite[Prop.~3.2]{DeMaesschalck2016} (here $F_1$, called $k$ in \cite[Eq.~(2.13)]{DeMaesschalck2016}, only depends upon $r_1$ through $r_1\epsilon_1=\epsilon$). {Let $\pi : \Delta_1^{in} \to \Delta_1^{out}$ denote the transition map} induced by the flow of system \eqref{basecase}, where
\begin{equation}
\label{eq:sections}
\begin{split}
\Delta^{in}_1 &= \left\{ (\rho, y_1, \eps_1) : y_1 \in I , \eps_1 \in [0, \delta] \right\} , \\
\Delta^{out}_1 &= \left\{ (r_1, y_1, \delta) : r_1 \in [0,\rho] , y_1 \in \tilde I \right\} ,
\end{split}
\end{equation}
with $I := [1-\beta, 1+\beta] \subset (0,\infty)$ and $\tilde I := [1 - \tilde \beta, 1 + \tilde \beta]$ for some fixed $\beta \in (0,1)$, $\tilde \beta > \beta$, and small but fixed $\rho, \delta > 0$. {Due to resonances along $L$, one would in general expect logarithmic terms in the expansion of $\pi$ (see e.g. \cite{DeMaesschalck2016}). However, all resonant terms are absent in the present case due to the flatness \eqref{infflat} with respect to $\epsilon_1$. Indeed, we have the following:}
\begin{proposition}
\label{prop:Y_smoothness}
Fix any $N\in \mathbb N$ and a compact interval $J\subset (0,\infty)$. Then for $\rho>0, \delta>0$ sufficiently small, there exists a $C^N$-smooth function $Y : I \times [0,\delta) \times J \rightarrow \mathbb R$ such that
the map $\pi : \Delta_1^{in} \to \Delta_1^{out}$ is well-defined and given by
\begin{equation}
\label{eq:pi}
\pi : (\rho, y_1, \eps_1) \mapsto 
\left( \frac{\rho}{\delta} \eps_1, Y(y_1, \eps_1, c ; \rho, \delta), \delta \right) .
\end{equation}
\end{proposition}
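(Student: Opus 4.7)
The plan is to combine the explicit linear flow of the $(r_1,\epsilon_1)$-subsystem of \eqref{basecase} with a resonant normal form reduction of the $y_1$-equation that capitalises on the infinite flatness \eqref{infflat}. Since $r_1(t)=\rho e^{-t}$ and $\epsilon_1(t)=\epsilon_1 e^t$ along any trajectory, the product $\epsilon:=r_1\epsilon_1$ is conserved and the transit time from $\Delta^{in}_1$ to $\Delta^{out}_1$ equals $\tau = \ln(\delta/\epsilon_1)$. This immediately yields $r_1(\tau)=\rho\epsilon_1/\delta$ and hence the first component of $\pi$ in \eqref{eq:pi}.

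For the second component $Y$, I would apply a resonant normal form for a line of $(-1,0,1)$-saddles in the spirit of \cite[Prop.~3.2]{DeMaesschalck2016}. The only formal resonant monomials in the $y_1$-equation are powers of the invariant $\epsilon=r_1\epsilon_1$ (with polynomial dependence on $y_1$), so one may seek a $C^N$-smooth, $(r_1,\epsilon_1)$-fibered coordinate change of $y_1$, depending smoothly on $c\in J$, that conjugates \eqref{basecase} to
\begin{equation*}
\dot r_1=-r_1,\quad \dot y_1 = \tilde F(\epsilon,y_1,c),\quad \dot \epsilon_1 = \epsilon_1,
\end{equation*}
in which the $y_1$-equation depends on $(r_1,\epsilon_1)$ only through $\epsilon$. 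Crucially, the flatness \eqref{infflat} carries over formally: all resonant coefficients vanish to infinite order, so $j_\infty \tilde F(\cdot,y_1,c)=0$ at $\epsilon=0$.

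In these normal form coordinates the $y_1$-equation becomes an autonomous ODE in $y_1$ with parameter $\epsilon=\rho\epsilon_1$. Integrating from $t=0$ to $t=\tau$ produces a flow map that is $C^N$-smooth in $(y_1,\epsilon,c)$; the flatness estimate $|\tilde F(\epsilon,\cdot,c)|\le C_N\epsilon^N$ combined with the logarithmic growth $\tau=\ln(\delta/\epsilon_1)$ bounds the total variation in $y_1$ by $\tau C_N(\rho\epsilon_1)^N$, which, together with all of its $\epsilon_1$-derivatives, vanishes as $\epsilon_1\to 0^+$. Hence the flow extends $C^N$-smoothly up to $\epsilon_1=0$, and inverting the (smooth) normal form transformation yields a $C^N$-smooth function $Y$ with the structure asserted in \eqref{eq:pi}.

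The principal technical obstacle is the construction of the normal form transformation with $C^N$-smoothness up to the boundary $\{\epsilon_1=0\}$ and uniformly in $c\in J$. One would adapt the argument of \cite[Prop.~3.2]{DeMaesschalck2016} by solving a sequence of homological equations whose resonant obstructions are precisely the monomials $\epsilon^k$ (with $y_1$-polynomial coefficients); since $F_1$ is flat in $\epsilon_1$, these formal obstructions vanish identically, and a Borel-type summation yields a genuine $C^N$-smooth transformation. The vanishing of the resonant contributions is what ensures that no logarithmic terms $\ln\epsilon_1$ appear in $\pi$, so that $Y$ is genuinely $C^N$ rather than merely continuous.
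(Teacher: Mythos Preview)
Your approach and the paper's are in the same spirit---both reduce \eqref{basecase} to a resonant normal form near $L$ and invoke \cite[Prop.~3.2]{DeMaesschalck2016} for the $C^N$ smoothness of the transition map---but the execution differs in a way worth noting. You aim for a normal form $\dot y_1=\tilde F(\epsilon,y_1,c)$ in which the $y_1$-equation depends on $(r_1,\epsilon_1)$ \emph{only} through the invariant $\epsilon=r_1\epsilon_1$, and then integrate that autonomous equation over the transit time $\tau=\ln(\delta/\epsilon_1)$. The paper instead performs an explicit finite induction: starting from \eqref{basecase}, a sequence of $(r_1,\epsilon_1)$-fibred changes $y_{n+1}=y_n+r_1^n\beta_n(y_n,\epsilon_1)$, with $\beta_n$ determined by the homological equation $\epsilon_1\partial_{\epsilon_1}\beta_n=n\beta_n-F_{n,0}$, produces $\dot y_{N+3}=r_1^{N+3}F_{N+3}=\epsilon^{N+3}R(r_1,y_{N+3},\epsilon_1)$ with $R$ smooth. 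This is \emph{exactly} the normal form of \cite[Prop.~3.2]{DeMaesschalck2016} (case $a=0$), so the smoothness of $Y$ is then a direct citation rather than a separate integration argument.

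Two remarks on the trade-off. First, your target normal form (dependence on $\epsilon$ only) is strictly stronger than the paper's $\epsilon^{N+3}R(r_1,y_{N+3},\epsilon_1)$; achieving it smoothly is precisely the hard step you flag, and your ``Borel-type summation'' gesture does not quite close the gap---the paper sidesteps this by truncating the reduction at order $N+3$ and letting \cite{DeMaesschalck2016} handle the remainder. Second, the paper's explicit $\beta_n$ (in particular $\beta_1(y_1,\epsilon_1)=-\epsilon_1\int_0^{\epsilon_1}s^{-2}F_{1,0}(y_1,s)\,\dd s$) is not just bookkeeping: it is reused immediately afterwards to compute the leading-order $\epsilon_1$-coefficient of $Y$, which feeds into the bifurcation equation for the minimal wave speed. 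Your more abstract route would require a separate computation to recover that coefficient.
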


\begin{proof}
The first component of the right-hand side in \eqref{eq:pi} follows directly from $\eps = \eps_1 r_1 = \text{const.}$ It therefore remains to show that the function $Y$ is well-defined and $C^N$-smooth. In order to do so, we show that system \eqref{basecase} can be brought into the local normal form in \cite[Prop.~3.2]{DeMaesschalck2016}, which, based on {additional arguments} in \cite{DeMaesschalck2016}, is sufficient to prove $C^N$-smoothness of $\pi$. Since $\rho, \delta>0$ are fixed as parameters, we shall suppress the dependency of $Y$ on these quantities. We also suppress the dependency on $c$; the smoothness with respect to this parameter should be obvious from the proof.

First, we claim that for any $N\in \mathbb N$ there is an $(r_1,\epsilon_1)$-fibered $C^\infty$-diffeomorphism
\begin{align*}
y_1 = y_N+\alpha_N(r_1,y_N,\epsilon_1),
\end{align*}
with $j_\infty \alpha_N(r_1,y_N,\cdot)=0$, such that
\begin{equation}
\begin{split}
\label{eq:y_N_system}
\dot r_1 &= -r_1,\\
\dot y_N & = r_1^N F_N(r_1,y_N,\epsilon_1), \\
\dot \epsilon_1& =\epsilon_1,
\end{split}
\end{equation}
with $j_\infty F_N(r_1,y_N,\cdot)=0$. Following \cite{DeMaesschalck2016}, we proceed by induction. We have already shown the base case $N = 1$ in the derivation of system \eqref{basecase} using \eqref{this}. We therefore proceed directly to the induction step and suppose that the statement is true for $N=n$. We write
\begin{equation}
\label{eq:y_n}
y_{n+1} = y_n + r_1^n \beta_n(y_n,\epsilon_1),
\end{equation}
with the (as yet undetermined) $\beta_n$ satisfying $j_\infty \beta_n(y_n, \cdot) = 0$. Writing 
\[
F_n(r_1,y_n,\epsilon_1) = F_{n,0}(y_n,\epsilon_1) +r_1 F_{n,1}(r_1,y_n,\epsilon_1) ,
\]
we obtain 
\begin{equation}
\dot y_{n+1} = r_1^n \left\{F_{n,0}(y_{n+1},\epsilon_1) -n \beta_n(y_n,\epsilon_1) + \epsilon_1 \frac{\partial }{\partial \epsilon_1}\beta_n(y_n,\epsilon_1)\right\}+\mathcal O(r_1^{n+1}).\label{yn1eqns}
\end{equation}
The $\mathcal O(r_1^n)$ terms vanish if we impose the following solvability condition on $\beta_n$:
\begin{equation}\label{betaneqn}
\epsilon_1\frac{\partial }{\partial \epsilon_1}\beta_n(y_n,\epsilon_1) =n \beta_n(y_n,\epsilon_1)-F_{n,0}(y_n,\epsilon_1) ,\quad j_\infty \beta_n(y_n, \cdot) = 0,
\end{equation}
which may be solved to give
\begin{equation}
\label{eq:beta_n}
\beta_n(y_n,\eps_1) = - \eps_1 \int_0^{\eps_1} s^{-2} F_{n,0}(y_n,s) \dd s.
\end{equation}

Now fix $N+3\in \mathbb N$ and use $\eps = r_1 \eps_1$ to rewrite system \eqref{eq:y_N_system} as
\begin{equation}
\label{eq:normal_form}
\begin{split}
\dot r_1 &=-r_1,\\
\dot y_{N+3} &= \epsilon^{N+3} R(r_1, y_{N+3}, \epsilon_1), \\
\dot \epsilon_1 &=\epsilon_1,
\end{split}
\end{equation}
where $R:=\epsilon_1^{-(N+3)} F_{N+3}$. Notice that this is smooth since $j_\infty F_{N+3}(r_1,y_n,\cdot)=0$. System \eqref{eq:normal_form} is -- up to a change of notation -- precisely the local normal form appearing in \cite[Prop.~3.2]{DeMaesschalck2016} (case $a = 0$). $C^N$-smoothness of the function $Y$ then follows from the arguments in \cite[Sec.~3.2]{DeMaesschalck2016}.
\end{proof}

Using {Proposition} \ref{prop:Y_smoothness} and a number of elements of the proof thereof, we can derive the leading order asymptotics for $Y(y_1, \eps_1,c)$.

\begin{lemma}
\label{lem:Y_asymptotics}
The function $Y$ has the following asymptotic expansion in $\eps_1$:
\begin{equation}
\label{eq:Y_asymptotics}
Y(y_1, \eps_1, c) = 
y_1 + \eps_1 \rho \int_0^{\delta} s^{-2} F_1(0,y_1,s,c) \dd s + \mathcal O(\epsilon_1^2) ,		
\end{equation}
where the function $F_1$ is given in \eqref{eq:F1}.
\end{lemma}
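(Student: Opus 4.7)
My plan is to combine the smoothness result of Proposition \ref{prop:Y_smoothness} (which guarantees that $Y$ admits a Taylor expansion in $\eps_1$) with a direct integration of the $y_1$-equation in \eqref{basecase} along orbits, using the invariant $\eps = r_1 \eps_1$ and the flatness property \eqref{infflat} to identify the first-order term. The key simplification is that the $r_1$ and $\eps_1$ equations in \eqref{basecase} are linear and decouple from $y_1$, so orbits through $\Delta_1^{in}$ can be parameterised explicitly.

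Concretely, I would first fix an initial condition $(r_1, y_1, \eps_1)|_{t=0} = (\rho, y_1, \eps_1)$ in $\Delta_1^{in}$ and use $\dot r_1 = -r_1$, $\dot \eps_1 = \eps_1$ to write $r_1(t) = \rho \e^{-t}$, $\eps_1(t) = \eps_1 \e^{t}$; the transit time to $\Delta_1^{out}$ is $T = \log(\delta/\eps_1)$ and the first component of $\pi$ is $r_1(T) = \rho \eps_1 / \delta$, matching \eqref{eq:pi}. Integrating the second equation of \eqref{basecase} along this orbit and making the change of variables $s = \eps_1(t) = \eps_1 \e^{t}$ (so $\dd t = \dd s/s$ and $r_1(t) = \rho \eps_1 / s$) gives
\begin{equation*}
Y(y_1,\eps_1,c;\rho,\delta) - y_1 \;=\; \rho \eps_1 \int_{\eps_1}^{\delta} s^{-2} F_1\!\left(\tfrac{\rho \eps_1}{s},\, y_1(s),\, s,\, c\right) \dd s,
\end{equation*}
where $y_1(s)$ denotes the value of $y_1$ along the orbit at time $t(s)$.

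Next I would exploit the flatness $j_\infty F_1(r_1,y_1,\cdot,c)=0$ from \eqref{infflat}, which makes $s \mapsto s^{-2} F_1(0, y_1, s, c)$ continuous on $[0,\delta]$ (indeed flat at $s=0$), and also ensures that the partial derivatives $\partial_{r_1} F_1$ and $\partial_{y_1} F_1$ are flat in $\eps_1$ uniformly on compact subsets. Together with $|\rho\eps_1/s| \leq \rho$ on the integration interval and a Gronwall-type estimate giving $|y_1(s) - y_1| = \mathcal O(\eps_1)$ with a flat prefactor, this lets me replace $F_1(\rho \eps_1/s, y_1(s), s, c)$ by $F_1(0, y_1, s, c)$ inside the integral with total error $\mathcal O(\eps_1)$ uniformly, and extend the lower limit of integration from $\eps_1$ to $0$ with an additional error that is flat in $\eps_1$. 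The upshot is
\begin{equation*}
Y(y_1,\eps_1,c) = y_1 + \eps_1 \rho \int_0^{\delta} s^{-2} F_1(0, y_1, s, c) \dd s + \eps_1 \cdot \mathcal O(\eps_1).
\end{equation*}

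Finally, to promote the $\eps_1 \cdot \mathcal O(\eps_1)$ error to a genuine $\mathcal O(\eps_1^2)$ remainder in the Taylor sense, I would invoke Proposition \ref{prop:Y_smoothness} with $N \geq 2$, which guarantees that $Y(y_1,\cdot,c)$ is $C^2$ on $[0,\delta)$; Taylor's theorem then matches $Y(y_1,0,c) = y_1$ and $\partial_{\eps_1} Y(y_1,0,c) = \rho \int_0^\delta s^{-2} F_1(0,y_1,s,c) \dd s$ with a $C^2$ remainder of the required order. The main obstacle is the careful handling of the integral as $\eps_1 \to 0$, since the nominal integration variable $s$ ranges down to $\eps_1$ where $r_1(s) = \rho$ is not small; this is overcome by using flatness of $F_1$ to bound the integrand uniformly and dominated convergence to justify the interchange of limit and integral.
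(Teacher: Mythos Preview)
Your argument is correct, and it takes a genuinely different route from the paper. The paper does \emph{not} integrate \eqref{basecase} directly; instead it reuses the normal-form machinery built inside the proof of Proposition~\ref{prop:Y_smoothness}. Concretely, the paper passes to the coordinate $y_2 = y_1 + r_1\beta_1(y_1,\eps_1)$ (the first step of the inductive sequence $y_1\to y_2\to\cdots\to y_{N+3}$), observes that in this coordinate the transition map satisfies $y_2^{out}=y_2^{in}+\mathcal O(\eps_1^2)$, and then transforms back to $y_1$, obtaining $Y=y_1-\tfrac{\rho}{\delta}\eps_1\beta_1(y_1,\delta)+\mathcal O(\eps_1^2)$; substituting the explicit expression \eqref{eq:beta_n} for $\beta_1$ (the solution of the homological equation \eqref{betaneqn}) then yields \eqref{eq:Y_asymptotics}.

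Your direct-integration approach is more elementary and essentially self-contained: you only use the \emph{statement} of Proposition~\ref{prop:Y_smoothness} (for $C^2$-smoothness at the end), whereas the paper's argument depends on internal constructions from its \emph{proof}. The trade-off is that the paper's normal-form route generalises immediately to higher-order expansions (one simply goes one step further in the $y_n$-sequence), while your direct integration would require more bookkeeping to extract the $\eps_1^2$-coefficient. One minor comment: your error estimates already give a uniform $\mathcal O(\eps_1^2)$ bound (the replacements of $r_1\to 0$, $y_1(s)\to y_1$, and $\int_{\eps_1}^\delta\to\int_0^\delta$ each contribute $\mathcal O(\eps_1^2)$ thanks to flatness), so the final appeal to Proposition~\ref{prop:Y_smoothness} is a convenient safety net rather than a necessity.
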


\begin{proof}
From \eqref{this} and the proof of {Proposition} \ref{prop:Y_smoothness} we have the following relations:
\[
y_{N+3}^{out} = y_{N+3}^{in} + \mathcal O(\eps^N), \quad 
y_1 = y_{N+3} + \alpha_{N+3}(r_1, y_{N+3}, \eps_1), \quad
y_{N+3} = y_1 + \tilde \alpha_1(r_1, y_1, \eps_1) ,
\]
where $\tilde \alpha_1(r_1, y_1, \eps_1)$ is $C^N$-smooth and $\mathcal O(\eps_1)$. Using these relations to evaluate $y_1^{out} = Y(y_1, \eps_1, c)$, we obtain
\begin{equation}
\label{eq:Y}
Y(y_1, \eps_1, c) = y_1 + \hat \alpha_1(\rho, y_1, \eps_1) ,
\end{equation}
where the function $\hat \alpha_1(\rho, y_1, \eps_1)$ is $C^N$-smooth and $\mathcal O(\eps_1)$ as $\eps_1 \to 0$. In order to determine $\hat \alpha_1(\rho, y_1, \eps_1)$ to leading order in $\eps_1 \to 0$, we use the transformation $y_2 = y_1 + \alpha_1(r_1, y_1, \eps_1) = y_1 + r_1 \beta_1(y_1, \eps_1)$ and the fact that
\[
y_2^{out} = y_2^{in} + \hat \alpha_2(\rho, y_2^{in}, \eps_1^{in}) ,
\]
where the function $\hat \alpha_2(\rho, y_2^{in}, \eps_1^{in})$ is $C^{N-1}$-smooth and $\mathcal O((\eps_1^{in})^2)$ as $\eps_1^{in} \to 0$ (this follows from relations analogous to those which led to \eqref{eq:Y}). We obtain
\begin{equation*}
\begin{split}
Y(y_1, \eps_1, c) &= 
y_2^{out} - \frac{\rho}{\delta} \eps_1 \beta_1(Y(y_1, \eps_1, c), \delta) \\
&= y_2^{in} -  \frac{\rho}{\delta} \eps_1 \beta_1(y_1, \delta) + \mathcal O(\eps_1^2) \\
&= y_1 - \frac{\rho}{\delta} \eps_1 \beta_1(y_1, \delta) + \mathcal O(\eps_1^2) .
\end{split}
\end{equation*}
Substituting the expression for $\beta_1(y_1, \delta)$ in \eqref{eq:beta_n} leads to \eqref{eq:Y_asymptotics}, as required.
\end{proof}

Taken together, {Proposition} \ref{prop:Y_smoothness} and {Lemma} \ref{lem:Y_asymptotics} imply that the transition map $\pi : \Delta_1^{in} \to \Delta_1^{out}$ is $C^N$-smooth of the form \eqref{eq:pi}, with $Y(y_1,\eps_1,c)$ given by \eqref{eq:Y_asymptotics}. We now use Lemma \ref{lem:Y_asymptotics} in order to derive a bifurcation equation on the section $\Delta_1^{out}$. More precisely, we introduce a distance function on {$\Delta_1^{out}$} via
\begin{equation}
\label{eq:distance_fn}
\mathcal D(c,\eps) := {Y_u(c,\eps) - Y_s(c,\eps)} .
\end{equation}
{Here $Y_u(c,\eps)$} denotes the $y_1$-coordinate of the intersection ${w_{\eps,1}^u}(p_-) \cap \Delta_1^{out}$, where ${w_{\eps,1}^u}(p_-)$ denotes the (unique) strong unstable manifold of the point $p_-$ {in $K_1$ coordinates}, and {$Y_s(c,\eps)$} denotes the $y_1$-coordinate of the intersection ${w_{\eps,1}^s}(p_{+}) \cap \Delta_1^{out}$, where ${w_{\eps,1}^s}(p_{+})$ denotes the (unique) stable manifold of the saddle $p_{+}$, {which is a perturbation of the limiting stable manifold in \eqref{eq:w_1}}. Notice that ${w_{0,1}^u}(p_{{-}}) = \Gamma^1(1)$, ${w_{0,1}^s}(p_{+}) = \Gamma^2 { \setminus p_+}$, and that zeros of \eqref{eq:distance_fn} correspond to heteroclinic solutions of system \eqref{eq:ZFK_system}. 

\begin{lemma}
\label{lem:Y_u}
{For each $N \in \mathbb N$ there exists an $\eps_0 = \eps_0(N) > 0$} 
such that {$Y_u$} is $C^N$-smooth on $(c,\eps) \in (1-\beta,1+\beta) \times [0,\eps_0)$ and satisfies
\begin{equation}
\label{eq:Y_u}
{Y_u(c,\eps)} = c + \eps \int_0^\delta s^{-2} F_1(0,c,s,c) \dd s + \mathcal O(\eps^2) 
\end{equation}
as $\eps \to 0$.
\end{lemma}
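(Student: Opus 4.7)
The plan is to express $Y_u(c,\eps)$ as the second coordinate of $\pi(\rho,\, y_1^{\mathrm{in}}(c,\eps),\, \eps/\rho)$, where $y_1^{\mathrm{in}}(c,\eps)$ denotes the $y_1$-coordinate of the intersection $w^u_{\eps,1}(p_-) \cap \Delta_1^{\mathrm{in}}$, and then to unpack this composition via Lemma \ref{lem:Y_asymptotics}. Both the $C^N$-smoothness and the explicit leading-order coefficient will follow from two inputs: the smooth, flat graph description of $w^u_\eps(p_-)$ provided by Lemma \ref{lemma:strong}, and the smoothness and asymptotic formula for the transition map $\pi$ from Proposition \ref{prop:Y_smoothness} and Lemma \ref{lem:Y_asymptotics}.

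First I would apply Lemma \ref{lemma:strong} with $\kappa = \rho$ to write $w^u_\eps(p_-)$ as the $C^\infty$-graph $\eta = c\theta + R(\theta,\eps)$ over $\theta \in [0, 1-\rho]$, with $R$ infinitely flat in $\eps$ (and, inspecting the blown-up system \eqref{eq:r1theta1} in the proof of Lemma \ref{lemma:strong}, also jointly smooth in $c$). Transforming to $K_1$-coordinates via $\theta = 1-r_1$, $\eps_1 = \eps/r_1$ and restricting to $r_1 = \rho$ places the intersection with $\Delta_1^{\mathrm{in}}$ at the point $(\rho,\, c + R(1-\rho,\eps),\, \eps/\rho)$. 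Substitution into the formula \eqref{eq:y1_inverse} for $y_1$ then gives
\[
y_1^{\mathrm{in}}(c,\eps) = \sqrt{\bigl(c + R(1-\rho,\eps)\bigr)^2 + \bigl((\rho/\eps) + 1\bigr)\e^{-\rho/\eps}}.
\]
Since $R(1-\rho,\cdot)$ and the exponential term $((\rho/\eps)+1)\e^{-\rho/\eps}$ are both flat as $\eps \to 0^+$, and since $c > 1-\beta > 0$ keeps the radicand bounded away from zero for small $\eps_0$, this yields $y_1^{\mathrm{in}}(c,\eps) = c + \mathcal O(\eps^N)$ for every $N \in \mathbb N$, with $y_1^{\mathrm{in}}$ jointly $C^\infty$-smooth on $(1-\beta,1+\beta) \times [0,\eps_0)$.

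Next, I would compose with $\pi$. By Proposition \ref{prop:Y_smoothness}, $\pi$ is $C^N$-smooth on $\Delta_1^{\mathrm{in}}$, so composition with the $C^\infty$ map $(c,\eps) \mapsto (\rho, y_1^{\mathrm{in}}(c,\eps), \eps/\rho)$ gives $C^N$-smoothness of $Y_u$ on the stated rectangle. For the asymptotics, applying Lemma \ref{lem:Y_asymptotics} with $\eps_1 = \eps/\rho$ and replacing $y_1^{\mathrm{in}}$ by $c + \mathcal O(\eps^N)$ (using continuity of $F_1(0,\cdot,s,c)$ in its second argument together with uniform control of the integrand on $s \in [0,\delta]$) yields
\begin{align*}
Y_u(c,\eps) &= y_1^{\mathrm{in}}(c,\eps) + \eps \int_0^\delta s^{-2} F_1\bigl(0,\, y_1^{\mathrm{in}}(c,\eps),\, s,\, c\bigr)\,\dd s + \mathcal O(\eps^2) \\
&= c + \eps \int_0^\delta s^{-2} F_1(0, c, s, c)\,\dd s + \mathcal O(\eps^2),
\end{align*}
which is the claimed formula \eqref{eq:Y_u}.

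The step requiring the most care is the bookkeeping at the interface between the convective-diffusive description of $w^u_\eps(p_-)$ and the normal-form coordinates in $K_1$: namely, verifying that the flatness of $R$ combines cleanly with the flat exponential term in \eqref{eq:y1_inverse} to produce a joint $\mathcal O(\eps^N)$ remainder, and checking that the integrand $s^{-2} F_1(0,c,s,c)$ is in fact integrable down to $s = 0$ (which follows from the flatness \eqref{infflat} of $F_1$ in $\eps_1$). Everything else reduces to a routine composition of already-established smoothness and asymptotic statements.
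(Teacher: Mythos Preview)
Your proposal is correct and follows essentially the same route as the paper: use Lemma~\ref{lemma:strong} to place the strong unstable manifold on $\Delta_1^{in}$ at $y_1 = c + \mathcal O(\eps^N)$, then compose with the transition map via Proposition~\ref{prop:Y_smoothness} and Lemma~\ref{lem:Y_asymptotics} to obtain both the $C^N$-smoothness and the asymptotic expansion. Your explicit computation of $y_1^{\mathrm{in}}$ through \eqref{eq:y1_inverse} and your remark on integrability of $s^{-2}F_1(0,c,s,c)$ near $s=0$ are a bit more detailed than the paper's presentation, but the argument is the same.
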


\begin{proof}
Fixing $c \in (1 - \beta, 1 + \beta)$ ensures that ${w_0^u}(p_-)$, {as an orbit of \eqref{eq:ZFK_layer_problem}, intersects $\Sigma$ in a point $(1,c)$.} Regular perturbation theory {on compact subsets of $\theta<1$} implies that the fast fiber corresponding to this connection lifts to a $C^\infty$-smooth curve of the form
\begin{equation}
\label{eq:gamma_in}
\gamma^{in}(c,\eps) := \left\{ (\rho, c + \mathcal O(\eps^N), \rho^{-1} \eps ) : \eps \in [0,\eps_0] \right\} \subset \Delta_1^{in},
\end{equation}
when viewed in the $(r_1, y_1, \eps_1)$-coordinates, where $\eps_0 := \delta \rho$, see also Lemma \ref{lemma:strong}. Note that $\gamma^{in}(c,\eps) \subset \Delta_1^{in}$ as long as we choose $\delta$ and $\rho$ sufficiently small. Thus we can use {Proposition} \ref{prop:Y_smoothness} and {Lemma} \ref{lem:Y_asymptotics} in order to extend ${w_\eps^u}(p_-)$ to $\Delta_1^{out}$. Expanding in $\eps$ and using \eqref{eq:Y_asymptotics} together with the fact that $\eps = \eps_1 \rho$ for points on $\Delta_1^{in}$, we obtain
\[
{Y_u(c,\eps)} = Y \left(1 + \mathcal O(\eps^N), \rho^{-1} \eps, c \right) = 
c + \eps \int_0^\delta s^{-2} F_1(0,c,s,c) \dd s + \mathcal O(\eps^2) 
\]
as $\eps \to 0$, as required.
\end{proof}

We now derive an implicit equation for {$Y_s(c,\eps)$}.

\begin{lemma}
\label{lem:Y_s}
{For each $N \in \mathbb N$ there exists an $\eps_0 = \eps_0(N) > 0$} 
such that {$Y_s(c,\eps)$} is $C^N$-smooth on $c\in (1-\beta,1+\beta) \times [0,\eps_0)$ and satisfies
\begin{equation}
\label{eq:Y_s_eqn}
\frac{{Y_s(c,\eps)}^2 - 1}{2} - \epsilon \left[ c\delta^{-1} 	f({Y_s(c,\eps)},\delta) - \int_{\delta}^\infty \left( cs^{-2} f_1(1,s) +\frac{s^{-4}}{2} \e^{-s^{-1}}\right) \dd s\right] + \mathcal O(\epsilon^2) = 0 
\end{equation}
as $\eps \to 0$.
\end{lemma}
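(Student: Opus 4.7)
The plan is to combine the Hamiltonian structure of the limiting inner system \eqref{eq:K2_eqns_limiting} with a Melnikov-type argument, and then translate the result into the normal form coordinates via \eqref{this}. The key identity is that along solutions of the perturbed system \eqref{eq:K2_eqns},
\begin{equation*}
\frac{\dd}{\dd z_2} H(\theta_2, \eta) = \eps \, \eta \left( c \eta + \tfrac{1}{2}\theta_2^2 \e^{\theta_2} \right),
\end{equation*}
where $H$ is the Hamiltonian \eqref{eq:Hamiltonian} of the $\eps = 0$ limit.

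For the $C^N$-smoothness of $Y_s$, I would observe that \eqref{eq:K2_eqns} is a smooth regular perturbation of \eqref{eq:K2_eqns_limiting} on compact neighborhoods of $p_+$, so standard stable manifold theory produces a smooth family $w_\eps^s(p_+)$ depending smoothly on $(c,\eps)$. Its intersection with the section $\{\theta_2 = -\delta^{-1}\}$, which is the image of $\Delta_1^{out}$ under $\kappa_{21}$, is then smooth in $(c, \eps)$, and composing with the smooth inverse \eqref{eq:y1_inverse} of the normal form transformation \eqref{this} yields $Y_s(c,\eps)$.

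For the asymptotics, I would integrate the identity above forward along $w_\eps^s(p_+)$ from its intersection with $\{\theta_2 = -\delta^{-1}\}$ to $p_+$, where $H = 1/2$. To leading order, the perturbed orbit is replaced by the unperturbed stable manifold $\eta = h^s(\theta_2)$; changing variables via $\dd z_2 = \dd \theta_2/h^s(\theta_2)$ and then $s = -\theta_2^{-1}$, and invoking \eqref{f1prop}, this gives
\begin{equation*}
H\big|_{\theta_2 = -\delta^{-1}} = \tfrac{1}{2} - \eps \int_\delta^\infty \left( c s^{-2} f_1(1,s) + \tfrac{1}{2} s^{-4} \e^{-s^{-1}} \right) \dd s + \mathcal O(\eps^2).
\end{equation*}
Separately, substituting $\eta = -c r_1 + f_1(y_1, \eps_1)$ into \eqref{eq:Hamiltonian} and using the identity $f_1(y_1, \eps_1)^2 = y_1^2 - (\eps_1^{-1}+1)\e^{-\eps_1^{-1}}$ implicit in the definition of $f_1$ yields $H = y_1^2/2 + c^2 r_1^2/2 - c r_1 f_1(y_1, \eps_1)$. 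Evaluating this at $(r_1, y_1, \eps_1) = (\eps/\delta, Y_s, \delta)$ and equating with the Melnikov expression above, while absorbing the quadratic term $c^2 \eps^2/(2\delta^2)$ into $\mathcal O(\eps^2)$, produces \eqref{eq:Y_s_eqn}.

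The main technical obstacle will be justifying the Melnikov error estimate uniformly in $c \in (1 - \beta, 1 + \beta)$, since the integration along $w_\eps^s(p_+)$ takes place over an unbounded $z_2$-interval as the trajectory approaches the saddle $p_+$. The exponential convergence of both the perturbed and unperturbed stable manifolds to $p_+$ (which is hyperbolic uniformly in $(c,\eps)$) makes the integrand exponentially small as $z_2 \to \infty$, so a standard Gronwall estimate on a compact portion of the orbit — together with exponential tail bounds for the remainder — suffices to control the replacement error by $\mathcal O(\eps^2)$.
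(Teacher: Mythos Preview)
Your proposal is correct and follows essentially the same Melnikov-type argument as the paper: compute the change in $H$ along the stable manifold of $p_+$ between $\theta_2=-\delta^{-1}$ and $p_+$, then rewrite $H$ at the section in terms of $y_1$ via the normal form transformation. The only cosmetic difference is that the paper differentiates $H$ directly with respect to $\theta_2$ (using $\dd H/\dd\theta_2 = \eps(c\eta + \tfrac12\theta_2^2\e^{\theta_2})$), which parametrizes the orbit over the compact interval $\theta_2\in[-\delta^{-1},0]$ from the outset and thereby sidesteps the unbounded-$z_2$ issue you flag at the end; your change of variables $\dd z_2 = \dd\theta_2/\eta$ accomplishes the same thing, so the Gronwall/tail argument you propose is unnecessary once you work in $\theta_2$.
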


\begin{proof}
We begin in $K_2$ coordinates $(\eta,\theta_2,\eps)$, and look for an expression for the intersection ${w_\eps^s}(P_s) \cap \kappa_{12}(\Delta_1^{out})$, where $P_s := \{ (0,0, \eps) : \eps \in [0,\eps_0] \}$ denotes the line of equilibria emanating from $p_{+}$. We use the fact that trajectories are contained in constant level sets of the Hamiltonian function $H(\theta_2, \eta)$ defined in \eqref{eq:Hamiltonian} when $\eps = 0$, and consider now the perturbed dynamics for $\eps \in [0, \eps_0]$. We have
\[
\frac{\dd}{\dd \theta_2} H= \epsilon \left(c \eta +\frac{\theta_2}{2} \e^{\theta_2}\right),
\]
using $H=H(\theta_2, \eta(\theta_2))$.
Along the stable manifold $\eta(\theta_2;c,\epsilon) = h^s(\theta_2)+\mathcal O(\epsilon)$, where $h^s(\theta_2)$ is given by \eqref{eq:stable_manifold}, regular perturbation theory implies
\[
\frac{\dd}{\dd \theta_2} H=\epsilon \left(c h^s(\theta_2) +\frac{\theta_2}{2} \e^{\theta_2}\right)+\mathcal O(\epsilon^2) ,
\]
since we are working on compact subsets. Integrating this expression from $\theta_2=0$ (where the saddle is) to $\theta_2 = -1 / \delta$ (where $\kappa_{12}(\Delta_1^{out})$ is), we obtain
\begin{equation}\label{Hout}
H(-1/\delta, \eta^{out})-\frac12 = \epsilon \int_{0}^{-1/\delta}\left( ch^s(\theta_2) +\frac{\theta_2^2}{2} \e^{\theta_2}\right) \dd \theta_2+\mathcal O(\epsilon^2) ,
\end{equation}
where $\eta^{out}$ denotes the $\eta$-coordinate of the intersection ${w_\eps^s}(P_s) \cap \kappa_{12}(\Delta_1^{out})$ to be solved for. 
Using \eqref{y1def}, we have 
\begin{align*}
H(-1/\delta, \eta^{out})&=\frac{(y_1^{out})^2}{2}-\frac{\partial}{\partial \eta} H (-1/\delta, f_1(y_1^{out}))cr_1^{out}+\mathcal O((r_1^{out})^2)\\
&=\frac{(y_1^{out})^2}{2}-cr_1^{out}f_1(y_1^{out},\delta)+\mathcal O((r_1^{out})^2)
\end{align*}
where $y_1^{out} = {Y_s}(c,\eps)$ is given in terms of $\eta^{out}$ by the right-hand side of \eqref{this} evaluated at $(r_1^{out}, \eta^{out}, \delta)$, where $r_1^{out} = \delta^{-1} \epsilon$. Using \eqref{f1prop} and \eqref{Hout}, we obtain the equation
\[
\frac{(y_1^{out})^2}{2} -\frac12 - \epsilon \left[ c\delta^{-1} f(y_1^{out},\delta) - \int_{\delta}^\infty \left( cs^{-2} f_1(1,s) +\frac{s^{-4}}{2} \e^{-s^{-1}}\right) \dd s+\mathcal O(\epsilon)\right] = 0 ,
\]
as required.
\end{proof}

In order to identify the zeros of $\mathcal D(c,\eps)$, it suffices to consider solutions to the bifurcation equation
\begin{equation}
\label{eq:B_tilde}
\widetilde B(c, \eps) := B({Y_u}(c,\eps), c, \eps) = 0 ,
\end{equation}
where the function $B(X,c,\eps)$ is defined using the implicit equation \eqref{eq:Y_s_eqn}, i.e.~
\begin{equation}
\label{eq:B}
B(X, c, \eps) := \frac{X^2 - 1}{2} - \epsilon \left[c\delta^{-1} f(X,\delta) - \int_{\delta}^\infty \left( cs^{-2} f_1(1,s) +\frac{s^{-4}}{2} \e^{-s^{-1}}\right) \dd s\right] + \mathcal O(\epsilon^2) .
\end{equation}
The following result establishes a solution to \eqref{eq:B_tilde}.

\begin{lemma}
\label{lem:heteroclinic_minimal}
{For each $N \in \mathbb N$ there exists an $\eps_0 = \eps_0(N) > 0$ and} a $C^N$-smooth function {$\overline c : [0,\eps_0) \to [1, \infty)$} such that $\widetilde B(\overline c(\eps), \eps) = 0$ for all $\eps \in [0,\eps_0)$. In particular,
\[
{\overline c(0) = {1} ,\quad \overline c'(0) =  \int_{-\infty}^0 \left(1-h^s(x)\right) \dd x-1 \approx 0.34405>0 . }
\]
\end{lemma}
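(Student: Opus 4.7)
The plan is to apply the implicit function theorem to $\widetilde B(c,\eps) = 0$ and then to compute $\overline c'(0)$ by implicit differentiation. First, I would evaluate $\widetilde B$ at $\eps = 0$: using Lemma \ref{lem:Y_u} we have $Y_u(c, 0) = c$, and from \eqref{eq:B}, $B(X, c, 0) = (X^2 - 1)/2$, so $\widetilde B(c, 0) = (c^2 - 1)/2$. The relevant root is $c = 1$ (the root $c = -1$ is excluded by Remark \ref{rem:strong_weak_heteroclinic}), and $\partial_c \widetilde B(1, 0) = 1 \neq 0$. Since $\widetilde B$ is $C^N$-smooth (by the $C^N$-smoothness of $Y_u, Y_s$ from Lemmas \ref{lem:Y_u} and \ref{lem:Y_s}), the implicit function theorem yields, after possibly shrinking $\eps_0$, a unique $C^N$-smooth function $\overline c : [0, \eps_0) \to \R$ with $\overline c(0) = 1$ and $\widetilde B(\overline c(\eps), \eps) = 0$.

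Second, since $\partial_c \widetilde B(1, 0) = 1$, implicit differentiation gives $\overline c'(0) = -\partial_\eps \widetilde B(1, 0)$, which expands via the chain rule to
\begin{equation*}
\overline c'(0) = -\partial_X B(1, 1, 0) \cdot \partial_\eps Y_u(1, 0) - \partial_\eps B(1, 1, 0).
\end{equation*}
Substituting the leading-order terms from Lemmas \ref{lem:Y_u} and \ref{lem:Y_s}, together with the explicit formula for $F_1$ in \eqref{eq:F1} and the identity $f_1(1, s) = h^s(-s^{-1})$ from \eqref{f1prop}, expresses $\overline c'(0)$ as a combination of integrals over $(0, \delta)$ and $(\delta, \infty)$ plus a boundary term at $s = \delta$.

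The main obstacle is to reduce this $\delta$-dependent expression to the clean form claimed in \eqref{eq:wave_speed}. I would apply the change of variables $x = -s^{-1}$, which maps both integration intervals into subsets of $(-\infty, 0)$, and use the identity $(h^s)'(x) = x \e^x / (2 h^s(x))$, obtained by differentiating the first integral $(h^s(x))^2 = 1 + (x - 1) \e^x$, to recognize one of the resulting integrands as $x (h^s)'(x)$. Integration by parts on this integral, combined with careful bookkeeping of the boundary contribution at $x = -\delta^{-1}$ and the fact that $1 - h^s(x)$ is exponentially small as $x \to -\infty$, will eliminate all $\delta$-dependence (as it must, since $\overline c'(0)$ is intrinsically defined), leaving
\begin{equation*}
\overline c'(0) = \int_{-\infty}^0 \left( 1 - h^s(x) \right) \dd x - 1 .
\end{equation*}
The numerical value $\approx 0.34405 > 0$ then follows by direct numerical integration using the explicit formula for $h^s$ in \eqref{eq:stable_manifold}; strict positivity ensures that $\overline c$ maps into $[1, \infty)$ upon further shrinking $\eps_0$.
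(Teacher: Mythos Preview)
Your proposal is correct and the implicit function theorem step is identical to the paper's. The difference lies in how you simplify the $\delta$-dependent expression for $\overline c'(0)$. The paper first observes, by differentiating with respect to $\delta$, that the expression in \eqref{eq:tilde_B_eps} is independent of $\delta$, and then simply evaluates the limit $\delta \to 0$ (where the integral over $(0,\delta)$ disappears and the remaining terms combine into $\int_0^\infty s^{-2}(f_1(1,s)-1)\,\dd s$, which after the substitution $x=-s^{-1}$ gives the claimed form). Your route --- substituting $x=-s^{-1}$ first, recognising $\tfrac{x^2 e^x}{2h^s(x)} = x\,(h^s)'(x)$, and integrating by parts against $h^s(x)-1$ --- is a direct computation that cancels the $\delta$-dependence explicitly rather than inferring it. Both arguments are clean; the paper's trick of checking $\partial_\delta(\cdot)=0$ is shorter and avoids the bookkeeping of the boundary terms at $x=-\delta^{-1}$, while your approach has the advantage of being fully constructive and not relying on the a priori knowledge that the answer must be $\delta$-independent.
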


\begin{proof}
Using \eqref{eq:B_tilde} and \eqref{eq:B}, we obtain $\widetilde B(1,0) = 0$ and $\widetilde B_c'(1,0) = 1 \neq 0$. Thus, the existence of the $C^N$-smooth function $\eps \mapsto \overline c(\eps)$ satisfying $\overline c(0) = 1$ follows from the $C^N$-smoothness of $\widetilde B$ and the implicit function theorem.

It remains to calculate $\overline c'(0)$. Since $\overline c'(0) = - \widetilde B'_\eps(1,0) / \widetilde B'_c(1,0) = -\widetilde B'_\eps(1,0)$ by implicit differentiation, it suffices to determine $\widetilde B'_\eps(1,0)$. 
From \eqref{eq:B_tilde}, using the expression for {$Y_u(c,\eps)$} in \eqref{eq:Y_u}, we obtain
\[
\widetilde B_\eps'(1, 0) = 
\int_0^\delta s^{-2} F_1(0,1,s,1) \dd s - \delta^{-1} f(1,\delta) + \int_{\delta}^\infty \left( s^{-2} f_1(1,s) +\frac{s^{-4}}{2} \e^{-s^{-1}}\right) \dd s ,
\]
where we also used the fact that $\overline c(0) = 1$. In order to simplify the right-hand side, we use the definitions of $f_1$ and $F_1$ given in \eqref{this} and \eqref{eq:F1}, together with $\int_{0}^\infty \frac{s^{-4}}{2} \e^{-s^{-1}} \dd s = 1$. We obtain
\begin{equation}
\label{eq:tilde_B_eps}
\widetilde B_\eps'(1, 0) = 
1 + \left[ - \delta^{-1} f_1(1,\delta) + \frac{1}{2} \int_0^\delta \frac{s^{-4} \e^{-s^{-1}}}{f_1(1,s)} \dd s +	 \int_\delta^\infty s^{-2} f_1(1,s) \dd s \right] .
\end{equation}
Differentiating with respect to $\delta$ shows that \eqref{eq:tilde_B_eps} is independent of $\delta$. Therefore,
\begin{equation*}
\begin{split}
\widetilde B_\eps'(1, 0) &= 
\lim_{\delta \to 0} \widetilde B_\eps'(1, 0) \\
&= 1 + \lim_{\delta\rightarrow 0 } \left(\int_{\delta}^\infty s^{-2} f_1(1,s) \dd s - \delta^{-1} f_1(1,\delta)\right) \\
&= 1 + \int_0^{\infty} s^{-2} \left(f_1(1,s)-f_1(1,0)\right) \dd s \\
&= 1 + \int_{-\infty}^0  \left(h^s(\theta_2)-1\right) \dd \theta_2,
\end{split}
\end{equation*}
as required. 
\end{proof}

\subsection{Completing the proof of Theorem \ref{thm:main}}

Lemma \ref{lem:heteroclinic_minimal} shows the existence of a $C^N$-smooth minimal wave speed {function $\overline c(\eps)$} satisfying 
\eqref{eq:wave_speed} and {Assertion (ii)} of Theorem \ref{thm:main}. {To see that $\overline c$ is $C^\infty$, we first notice that $\overline c$ is defined on a domain $[0,\epsilon_0)$ with $\epsilon_0>0$ small enough (we can apply Lemma \ref{lem:heteroclinic_minimal} with $N=1$). Clearly $\overline c$ is $C^\infty$ for any $\epsilon\in (0,\epsilon_0)$, since the {original ZFK system \eqref{eq:ZFK_system}} is {$C^\infty$}. We therefore only have to show that $\overline c$ is $C^\infty$ at $\epsilon=0$. For this purpose, we use the uniqueness of $\overline c(\epsilon)$ for $\epsilon\in (0,\epsilon_0)$ and Lemma \ref{lem:heteroclinic_minimal} to conclude that $\overline c$ is $C^N$ on $[0,\epsilon_0(N))$ for some $0<\epsilon_0(N)\ll 1$ for any $N\in \mathbb N$.} The claim now follows.

{Next, we notice that {all heteroclinic} connections with $c\ne\overline c(\epsilon)$ are weak{, due to the uniqueness of the strong heteroclinic connection}. In particular, for $c<\overline c(\epsilon)$ the heteroclinic connections enter $\theta<0$. This shows {Assertion (i)} of Theorem \ref{thm:main}. To prove {Assertion (iii)}, we just have to show that the connections limit to $\Gamma(c)$ in Hausdorff distance. For this purpose, we first notice that for $c>\overline c(\epsilon)$ the stable manifold $w_\epsilon^s(p_{+})$ lies below the strong unstable manifold $w_\epsilon^u(p_{-})$. Moreover, $w_0^s{(p_+)}$ and {$\Gamma^1(c)$} (as a trajectory of \eqref{eq:ZFK_layer_problem}) have ${q} : (r_1,\eta,\epsilon_1)=(0,1,0)$ as their $\alpha$-limit resp. $\omega$-limit sets in the $K_1$-chart{, as shown in Figure \ref{fig:blow-up}}. It then follows from {Proposition} \ref{prop:Y_smoothness} that the intersection of $w_\epsilon^s(p_{+})$ with $\Delta^{in}=\{(\theta,\eta):\theta=1-\rho\}$ is $\mathcal O(\epsilon)$-close to $\Gamma^2(c)\cap \Delta^{in}$. Here we abuse notation slightly by using $\Delta^{in}$ for the blown-down version of \eqref{eq:sections}. The result then follows from Fenichel theory and Proposition \ref{prop:slow_manifold}.}

\section{Summary and outlook}
\label{sec:summary_and_outlook}

A primary aim of this work was to demonstrate the suitability of GSPT and geometric blow-up for the study of dynamical systems with singular exponential nonlinearities. We would like to emphasise and reiterate the following methodological point, which is supported by our analysis of the ZFK equation \eqref{eq:ZFK_eqn} as well as the analysis of electrical oscillator models in \cite{Jelbart2019c}:

\smallskip

\textit{Smooth dynamical systems with {singular} exponential nonlinearities can often be formulated as singular perturbation problems with a piecewise-smooth singular limit, after which the geometry and dynamics can be analysed using GSPT and geometric blow-up.}

\smallskip

This means that different limiting problems are to be expected on different regions of the phase space which are separated by 
switching manifolds, along which the system loses smoothness. Because the system is smooth for $\eps > 0$, one expects to be able to find a smooth connection between these regions by inserting an inner `rescaling regime', where another limiting problem arises, after geometric blow-up; we refer again to  \cite{Kristiansen2019c}. 
In the case of system \eqref{eq:ZFK_system}, 
the {singular exponential nonlinearity in the reaction term $\omega(\theta, \eps^{-1})$} leads to a non-smooth singular limit {in a `normalised system' which is obtained from \eqref{eq:ZFK_system} by dividing the the right-hand side by the positive expression in \eqref{eq:quantity}, since
	\[
	\frac{\omega(\theta, \eps^{-1})}{\eps^{-2} \left( 1 + \frac{1}{2} \e^{\eps^{-1} (1 - \theta)} \right)} \ \SJ{\to} \ 
	\begin{cases}
		0, & \theta < 1 , \\
		\theta (1 - \theta) , & \theta > 1 ,
	\end{cases}
	\]	
	as $\eps \to 0$. This leads to different limiting problems on three distinct regimes: $\theta < 1$, $\theta = 1 + \mathcal O(\eps)$ and $\theta > 1$. In order to identify and characterise travelling wave solutions to the ZFK equation \eqref{eq:ZFK_eqn}, it sufficed to consider the dynamics in two regimes only, namely, in the} 
{convective}-diffusive zone with $\theta < 1$ bounded away from $\Sigma = \{ \theta = 1 \}$, and a reactive-diffusive zone with $\theta = 1 + \mathcal O(\eps)$. After extending the phase space and applying the blow-up transformation \eqref{eq:blow-up}, we were able to {smoothly} connect the analyses in each of these zones, which we performed separately in Section \ref{sec:geometric_singular_perturbation_analysis}. The connection problem in chart $K_1$ within the blow-up is smooth, even though the singular limit in the original system \eqref{eq:ZFK_system} is not, showing that the blow-up has `resolved' the loss of smoothness.

Our main result is Theorem \ref{thm:main}, which describes a $(c,\eps)$-family of heteroclinic orbits in system \eqref{eq:ZFK_system} which correspond to travelling waves the original ZFK equation \eqref{eq:ZFK_eqn}. In essence, Theorem \ref{thm:main} is the rigorous and geometrically informative counterpart of existing results on travelling waves in (a number of closely related variants of) the ZFK equation, which have been obtained formally using HAEA 
{in e.g.~\cite{Buckmaster1982,Bush1979,Clavin2016,Williams1985,Zeldovich1938}}. 
We also moved beyond the existing formal results by proving {that} the minimal wave speed function $\overline c(\eps)$ {is $C^\infty$ for all $\eps \in [0,\eps_0)$} ({Proposition} \ref{prop:Y_smoothness} and the normal form in \cite{DeMaesschalck2016} was crucial for this), and providing an asymptotic expansion for the slow manifold $S_\eps$ (see again Proposition \ref{prop:slow_manifold}) which plays an important role in the construction of the non-minimal waves for $c > \overline c(\eps)$. The latter feature is not likely to be of significance for the ZFK problem in particular, given that one expects the system to select the minimal wave speed $\overline c(\eps)$ (recall Remark \ref{rem:c_selection}). Nevertheless, the proof of Proposition \ref{prop:slow_manifold} {is} of independent interest because it provides a rigorous and dynamical systems based approach to the determination of asymptotic expansions for flat slow manifolds.

This work paves the way for more complicated analyses \PS{of systems with singular exponential nonlinearities. 
	With regard to problems in combustion theory, a natural continuation of this work would be to consider the case of planar flames with non-unity Lewis number. In this case, one has to carry out a similar analysis for a system of two reaction-diffusion equations with different diffusivity constants, see e.g. 
	\cite[Eq.~(2.1.4)]{Clavin2016}.}
We also left questions of stability and selection completely open, see e.g.~Remark \ref{rem:c_selection}.  These and other related problems remain for future work.

\bibliographystyle{siam}
\bibliography{ZFK_bib}

\begin{thebibliography}{10}

\bibitem{Avery2022}
{\sc M.~Avery and A.~Scheel}, {\em Universal selection of pulled fronts},
  Communications of the American Mathematical Society, 2 (2022), pp.~172--231.

\bibitem{bonet-rev2016a}
{\sc C.~Bonet-Rev\'es and T.~M-Seara}, {\em {Regularization of sliding global
  bifurcations derived from the local fold singularity of Filippov systems}},
  Discrete and Continuous Dynamical Systems- Series a, 36 (2016),
  pp.~3545--3601.

\bibitem{Bossolini2017b}
{\sc E.~Bossolini, M.~Br{\o}ns, and K.~U. Kristiansen}, {\em Singular limit
  analysis of a model for earthquake faulting}, Nonlinearity, 30 (2017),
  p.~2805.

\bibitem{Brons2005}
{\sc M.~Br{\o}ns}, {\em Relaxation oscillations and canards in a nonlinear
  model of discontinuous plastic deformation in metals at very low
  temperatures}, in Proceedings of the Royal Society of London A: Mathematical,
  Physical and Engineering Sciences, vol.~461, The Royal Society, 2005,
  pp.~2289--2302.

\bibitem{Buckmaster1982}
{\sc J.~D. Buckmaster and G.~S.~S. Ludford}, {\em Theory of laminar flames},
  (No Title),  (1982).

\bibitem{Bush1979}
{\sc W.~B. Bush}, {\em Asymptotic analysis of laminar flame propagation: review
  and extension}, International Journal of Engineering Science, 17 (1979),
  pp.~597--613.

\bibitem{Clavin2016}
{\sc P.~Clavin and G.~Searby}, {\em Combustion waves and fronts in flows:
  flames, shocks, detonations, ablation fronts and explosion of stars},
  Cambridge University Press, 2016.

\bibitem{Carvalho2011}
{\sc T.~de~Carvalho, D.~J. Tonon, et~al.}, {\em Generic bifurcations of planar
  {F}ilippov systems via geometric singular perturbations}, Bulletin of the
  Belgian Mathematical Society-Simon Stevin, 18 (2011), pp.~861--881.

\bibitem{DeMaesschalck2016}
{\sc P.~De~Maesschalck and S.~Schecter}, {\em The entry--exit function and
  geometric singular perturbation theory}, Journal of Differential Equations,
  260 (2016), pp.~6697--6715.

\bibitem{Dumortier2006c}
{\sc F.~Dumortier, J.~Llibre, and J.~C. Art{\'e}s}, {\em Qualitative theory of
  planar differential systems}, Springer, 2006.

\bibitem{Dumortier2010}
{\sc F.~Dumortier, N.~Popovi{\'c}, and T.~J. Kaper}, {\em A geometric approach
  to bistable front propagation in scalar reaction--diffusion equations with
  cut-off}, Physica D: Nonlinear Phenomena, 239 (2010), pp.~1984--1999.

\bibitem{Erickson2008}
{\sc B.~Erickson, B.~Birnir, and D.~Lavall{\'e}e}, {\em A model for
  aperiodicity in earthquakes}, Nonlinear processes in geophysics, 15 (2008),
  pp.~1--12.

\bibitem{Estrin1980}
{\sc Y.~Estrin and L.~P. Kubin}, {\em Criterion for thermomechanical
  instability of low temperature plastic deformation}, Scripta Metallurgica, 14
  (1980), pp.~1359--1364.

\bibitem{Fenichel1979}
{\sc N.~Fenichel}, {\em Geometric singular perturbation theory for ordinary
  differential equations}, Journal of Differential Equations, 31 (1979),
  pp.~53--98.

\bibitem{Glass2018}
{\sc L.~Glass and R.~Edwards}, {\em Hybrid models of genetic networks:
  Mathematical challenges and biological relevance}, Journal of theoretical
  biology, 458 (2018), pp.~111--118.

\bibitem{Hester1968}
{\sc D.~Hester}, {\em The nonlinear theory of a class of transistor
  oscillators}, IEEE Transactions on Circuit Theory, 15 (1968), pp.~111--117.

\bibitem{Ironi2011}
{\sc L.~Ironi, L.~Panzeri, E.~Plahte, and V.~Simoncini}, {\em Dynamics of
  actively regulated gene networks}, Physica D: Nonlinear Phenomena, 240
  (2011), pp.~779--794.

\bibitem{Jelbart2019c}
{\sc S.~Jelbart, K.~U. Kristiansen, P.~Szmolyan, and M.~Wechselberger}, {\em
  Singularly perturbed oscillators with exponential nonlinearities}, Journal of
  Dynamics and Differential Equations, 34 (2022), pp.~1823--1875.

\bibitem{Jelbart2021}
{\sc S.~Jelbart, K.~U. Kristiansen, and M.~Wechselberger}, {\em Singularly
  perturbed boundary-equilibrium bifurcations}, Nonlinearity, 34 (2021),
  p.~7371.

\bibitem{Jelbart2020d}
\leavevmode\vrule height 2pt depth -1.6pt width 23pt, {\em Singularly perturbed
  boundary-focus bifurcations}, Journal of Differential Equations, 296 (2021),
  pp.~412--492.

\bibitem{Jones1995}
{\sc C.~K. Jones}, {\em Geometric singular perturbation theory}, in Dynamical
  systems, vol.~1609 of Lecture Notes in Mathematics, Springer, 1995,
  pp.~44--118.

\bibitem{Kristiansen2017}
{\sc K.~U. Kristiansen}, {\em Blowup for flat slow manifolds}, Nonlinearity, 30
  (2017), p.~2138.

\bibitem{Kristiansen2019b}
\leavevmode\vrule height 2pt depth -1.6pt width 23pt, {\em A new type of
  relaxation oscillation in a model with rate-and-state friction}, arXiv
  preprint,  (2019).

\bibitem{Kristiansen2019c}
\leavevmode\vrule height 2pt depth -1.6pt width 23pt, {\em The regularized
  visible fold revisited}, Journal of Nonlinear Science,  (2020).

\bibitem{Kristiansen2015b}
{\sc K.~U. Kristiansen and S.~J. Hogan}, {\em On the use of blowup to study
  regularizations of singularities of piecewise smooth dynamical systems in
  $\mathbb{R}^3$}, SIAM Journal on Applied Dynamical Systems, 14 (2015),
  pp.~382--422.

\bibitem{Kristiansen2015a}
\leavevmode\vrule height 2pt depth -1.6pt width 23pt, {\em Regularizations of
  two-fold bifurcations in planar piecewise smooth systems using blowup}, SIAM
  Journal on Applied Dynamical Systems, 14 (2015), pp.~1731--1786.

\bibitem{Kristiansen2019}
\leavevmode\vrule height 2pt depth -1.6pt width 23pt, {\em Resolution of the
  piecewise smooth visible--invisible two-fold singularity in $\mathbb{R}^3$
  using regularization and blowup}, Journal of Nonlinear Science, 29 (2019),
  pp.~723--787.

\bibitem{Kristiansen2023}
{\sc K.~U. Kristiansen and A.~Sarantaris}, {\em On a tropicalization of planar
  polynomial {ODE}s with finitely many structurally stable phase portraits},
  arXiv preprint arXiv:2305.18002,  (2023).

\bibitem{Kristiansen2019d}
{\sc K.~U. Kristiansen and P.~Szmolyan}, {\em Relaxation oscillations in
  substrate-depletion oscillators close to the nonsmooth limit}, Nonlinearity,
  34 (2021), p.~1030.

\bibitem{LeCorbeiller1960}
{\sc P.~Le~Corbeiller}, {\em Two-stroke oscillators}, IRE Transactions on
  Circuit Theory, 7 (1960), pp.~387--398.

\bibitem{Llibre2007}
{\sc J.~Llibre, P.~R. Silva, and M.~A. Teixeira}, {\em Regularization of
  discontinuous vector fields on $\mathbb{R}^3$ via singular perturbation},
  Journal of Dynamics and Differential Equations, 2 (2007), pp.~309--331.

\bibitem{Machina2013}
{\sc A.~Machina, R.~Edwards, and P.~van~den Driessche}, {\em Singular dynamics
  in gene network models}, SIAM Journal on Applied Dynamical Systems, 12
  (2013), pp.~95--125.

\bibitem{Miao2024}
{\sc Z.~Miao and N.~Popovi{\'c}}, {\em The effect of a cut-off on a model of
  invasion with dispersive variability}, To appear in Proceedings of the
  Workshop on Topics in Multiple Time Scale Dynamics, Banff 2022,  (2024).

\bibitem{Plahte2005}
{\sc E.~Plahte and S.~Kj{\o}glum}, {\em Analysis and generic properties of gene
  regulatory networks with graded response functions}, Physica D: Nonlinear
  Phenomena, 201 (2005), pp.~150--176.

\bibitem{Popovic2007}
{\sc N.~Popovi{\'c}}, {\em Front speeds, cut-offs, and desingularization: A
  brief case}, in Fluids and Waves: Recent Trends in Applied Analysis: Research
  Conference, May 11-13, 2006, the Universtiy of Memphis, Memphis, TN,
  vol.~440, American Mathematical Soc., 2007, p.~187.

\bibitem{Popovic2011}
\leavevmode\vrule height 2pt depth -1.6pt width 23pt, {\em A geometric analysis
  of front propagation in a family of degenerate reaction-diffusion equations
  with cutoff}, Zeitschrift f{\"u}r angewandte Mathematik und Physik, 62
  (2011), pp.~405--437.

\bibitem{Popovic2011b}
{\sc N.~Popovi{\'c}}, {\em A geometric classification of traveling front
  propagation in the nagumo equation with cut-off}, in Journal of Physics:
  Conference Series, vol.~268, IOP Publishing, 2011, p.~012023.

\bibitem{Popovic2012}
{\sc N.~Popovi{\'c}}, {\em A geometric analysis of front propagation in an
  integrable nagumo equation with a linear cut-off}, Physica D: Nonlinear
  Phenomena, 241 (2012), pp.~1976--1984.

\bibitem{Portisch2017}
{\sc S.~Portisch}, {\em A novel approach to dimension reduction in enzyme
  kinetics}, Master's thesis, TU Wien, 2017.

\bibitem{Velarde2012}
{\sc M.~G. Velarde}, {\em Nonequilibrium cooperative phenomena in physics and
  related fields}, vol.~116, Springer Science \& Business Media, 2012.

\bibitem{Wechselberger2019}
{\sc M.~Wechselberger}, {\em Geometric singular perturbation theory beyond the
  standard form}, in Frontiers in Applied Dynamical Systems: Reviews and
  Tutorials, Springer International Publishing, 2020.

\bibitem{Williams1985}
{\sc F.~A. Williams}, {\em Combustion theory}, CRC Press, 1985.

\bibitem{Zeldovich1938}
{\sc Y.~B. Zeldovich and D.~Frank-Kamenetskii}, {\em Theory of thermal flame
  propagation (in russian)}, J Phys Chem, 12 (1938), pp.~100--105.

\end{thebibliography}

\end{document}